\newtheorem{observation}{Observation}[section]
\newtheorem{theorem}{Theorem}[section]
\newtheorem{lemma}{Lemma}[section]
\newtheorem{corollary}{Corollary}[section]
\newtheorem{definition}{Definition}[section]
\newtheorem{problem}{Problem}
\theoremstyle{remark}
\begin{document}

\begin{center}
\Large{\textbf{$D$-Antimagic Labelings of \\Oriented 2-Regular Graphs}}\\[1em] 
    \normalsize{
        Ahmad Muchlas Abrar$^{1}$, Rinovia Simanjuntak$^{2,*}$\\[0.5em]
        $^1$Doctoral Program in Mathematics,\\ Faculty of Mathematics and Natural Sciences, Institut Teknologi Bandung, Bandung, 40132, West Java, Indonesia \\
        Email: 30123302@mahasiswa.itb.ac.id,\\ ORCID: \href{https://orcid.org/0009-0009-3593-8393}{0009-0009-3593-8393}\\[0.5em]
        $^2$Combinatorial Mathematics Research Group,\\ Faculty of Mathematics and Natural Sciences, Institut Teknologi Bandung, Bandung, 40132, West Java, Indonesia\\
        Email: rino@itb.ac.id,\\ ORCID: \href{https://orcid.org/0000-0002-3224-2376}{0000-0002-3224-2376}\\[1em]
        \textit{*Corresponding Author}
    }
\end{center}

\section*{Abstract}

Given an oriented graph $\overrightarrow{G}$ and $D$ a distance set of $\overrightarrow{G}$, $\overrightarrow{G}$ is $D$-antimagic if there exists a bijective vertex labeling such that the sum of all labels of the $D$-out-neighbors of each vertex is distinct. 

This paper investigates $D$-antimagic labelings of 2-regular oriented graphs. We characterize $D$-antimagic oriented cycles, when $|D|=1$; $D$-antimagic unidirectional odd cycles, when $|D|=2$; and $D$-antimagic $\Theta$-oriented cycles. Finally, we characterize $D$-antimagic oriented 2-regular graphs, when $|D|=1$, and $D$-antimagic $\Theta$-oriented 2-regular graphs. 

\vspace{0.2cm}
\noindent\textbf{Keyword:} $D$-antimagic labeling, oriented graph, oriented 2-regular
\noindent\textbf{Mathematics Subject Classification:} 05C78

\section{Introduction}

Let $\overrightarrow{G}$ be an oriented graph with vertex set $V(\overrightarrow{G})$ and arc set $A(\overrightarrow{G})$. An arc from vertex $u$ to vertex $v$ is denoted by $(u, v)$. The neighbors of $u$ are classified as \textit{out-neighbors} (those to which $u$ has outgoing arcs) and \textit{in-neighbors} (those from which $u$ receives incoming arcs). The \textit{in-degree} of $u$  is the number of its in-neighbors and the \textit{out-degree} is the number of its out-neighbors, denoted by $d^-(u)$ and $d^+(u)$, respectively. A vertex with zero in-degree is a \textit{source}, and a vertex with zero out-degree is a \textit{sink}. 

The \textit{distance from $u$ to $v$}, denoted $d(u, v)$, is the length of the shortest directed path from $u$ to $v$, if such a path exists; otherwise, $d(u, v) = \infty$. The \textit{diameter} of $\overrightarrow{G}$, denoted $\mathrm{diam}(\overrightarrow{G})$, is the greatest finite distance between any pair of vertices in $\overrightarrow{G}$. The graph $\overrightarrow{G}$ is \textit{connected} if, for every pair of distinct vertices $u$ and $v$, at least one of $d(u, v)$ or $d(v, u)$ is finite. If both $d(u, v)$ and $d(v, u)$ are finite for all vertex pairs $u$ and $v$, $\overrightarrow{G}$ is \textit{strongly connected}; otherwise, it is \textit{weakly connected}. In such a case, the orientation of $\overrightarrow{G}$ is referred to as a strong and a weak orientation, respectively.

The concept of distance antimagic labeling was introduced by Kamatchi and Arumugam in 2013~\cite{Kamatchi-64-13}. For a simple, undirected graph $G = (V,E)$ of order $n$, a bijection $h: V(G) \to \{1, 2, \dots, n\}$ is a \textit{distance antimagic labeling} of $G$ if, for every vertex $u$, the \textit{vertex weight} $\omega(u) = \sum_{v \in N(u)} h(v)$ is distinct. If $G$ admits such a labeling, it is called \textit{distance antimagic}. Various graph classes, including paths $P_n$, cycles $C_n \ (n \neq 4)$, wheels $W_n \ (n \neq 4)$~\cite{Kamatchi-64-13}, hypercubes $Q_n \ (n \geq 4)$~\cite{Kamatchi cube}, and circulant graphs~\cite{sy2014distance}, as well as graphs formed by cartesian, strong, direct, lexicographic, corona, and join products~\cite{simanjuntakprod, Handa}, have been shown to be distance antimagic.

Simanjuntak \textit{et al.}~\cite{AAC} generalized the previous concept to $D$-antimagic labeling, defined as follows. For a graph $G$ and a nonempty set of distances $D \subseteq \{0, 1, 2, \dots, \mathrm{diam}(G)\}$, the \textit{$D$-neighborhood of $u$} is $N_D(u) = \{v \mid d(v, u) \in D\}$. A bijection $f: V(G) \to \{1, 2, \dots, n\}$ is a \textit{$D$-antimagic labeling} of $G$ if the \textit{$D$-weight} $\omega_D(u) = \sum_{v \in N_D(u)} f(v)$ is distinct for all $u \in V(G)$. If such a labeling exists, $G$ is \textit{$D$-antimagic}. The $D$-antimagic labeling is a generalization in the sense that the distance antimagic labeling is a special case where $D = \{1\}$.

In \cite{Abrarlinearforest}, we extended the concept of $D$-antimagic labeling to oriented graphs, as defined below:


\begin{definition}
Let $\overrightarrow{G}$ be an oriented graph and $D$ be a set of finite distances between two vertices in $\overrightarrow{G}$. A \textbf{$D$-antimagic labeling of $\overrightarrow{G}$} is a bijection $f: V(\overrightarrow{G}) \to \{1, 2, \dots, |V(\overrightarrow{G})|\}$ such that for any distinct vertices $u$ and $v$ holds $\omega_D(u) \neq \omega_D(v)$,  where $\omega_D(v) = \sum_{x \in N_D(v)} f(x)$ is the \textbf{$D$-weight of $v$} and $N_D(v) = \{y\in V(\overrightarrow{G}) \mid d(v, y) \in D\}$ is the \textbf{$D$-neighborhood of $v$}. In such a case, $\overrightarrow{G}$ is called \textbf{$D$-antimagic}.
\end{definition}

The following tools are used in this paper:
\begin{lemma}[Handshake Lemma for Oriented Graphs]~\cite{DB West} \label{lem:handshake}
If $\overrightarrow{G}$ is an oriented graph, then
$$\sum_{u \in V(\overrightarrow{G})} d^-(u) = \sum_{u \in V(\overrightarrow{G})} d^+(u) = |A(\overrightarrow{G})|.$$
\end{lemma}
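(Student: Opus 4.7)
The plan is to prove both equalities by a double-counting argument on the arc set $A(\overrightarrow{G})$. First I would observe that, by definition, $d^+(u)$ equals the number of arcs whose tail is $u$, and $d^-(u)$ equals the number of arcs whose head is $u$. Since $\overrightarrow{G}$ is an oriented graph, every arc $(x,y)\in A(\overrightarrow{G})$ has a unique tail $x$ and a unique head $y$; consequently, partitioning $A(\overrightarrow{G})$ according to tails (respectively heads) immediately gives the two sums.

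To make this precise, I would introduce the indicator $\mathbf{1}[(x,y)\in A(\overrightarrow{G})]$ and switch the order of summation:
\[
\sum_{u\in V(\overrightarrow{G})} d^+(u) \;=\; \sum_{u\in V(\overrightarrow{G})}\sum_{y\in V(\overrightarrow{G})}\mathbf{1}[(u,y)\in A(\overrightarrow{G})] \;=\; \sum_{(x,y)\in A(\overrightarrow{G})} 1 \;=\; |A(\overrightarrow{G})|.
\]
The analogous computation with the roles of head and tail interchanged yields $\sum_{u} d^-(u)=|A(\overrightarrow{G})|$, and stringing the two identities together produces the displayed chain of equalities.

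The main obstacle is essentially nonexistent: the lemma is a bookkeeping identity, and the only hypothesis actually used is that each arc is an ordered pair contributing exactly one unit to a single out-degree and a single in-degree. Since this is precisely what the definition of $A(\overrightarrow{G})$ guarantees, the double-counting argument carries through without complication, which is why the statement is standard and cited to \cite{DB West}.
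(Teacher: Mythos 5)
Your double-counting argument is correct and is the standard proof of this identity; the paper itself offers no proof, simply citing West, and the argument you give is precisely the one expected there (each arc contributes exactly once to the out-degree of its tail and once to the in-degree of its head).
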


\begin{lemma}\label{lem:all 0} \cite{Abrarlinearforest}
All graphs are $\{0\}$-antimagic.
\end{lemma}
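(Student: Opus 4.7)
The plan is to unpack the definition of $N_D(v)$ when $D = \{0\}$ and observe that it collapses to a trivial set. Specifically, for any (oriented or undirected) graph and any vertex $v$, we have $d(v,y) = 0$ if and only if $y = v$, so $N_{\{0\}}(v) = \{v\}$.

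Given any bijection $f : V(\overrightarrow{G}) \to \{1, 2, \ldots, |V(\overrightarrow{G})|\}$, the $\{0\}$-weight of each vertex is then
\[
\omega_{\{0\}}(v) \;=\; \sum_{x \in N_{\{0\}}(v)} f(x) \;=\; f(v).
\]
Since $f$ is a bijection, the values $f(v)$ are pairwise distinct, so the weights $\omega_{\{0\}}(v)$ are pairwise distinct, and $f$ itself is a $\{0\}$-antimagic labeling.

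There is no real obstacle here: the lemma is essentially a definitional unwinding, and any bijective labeling works as a witness. The only subtlety worth mentioning is the convention that $d(v,v) = 0$, which ensures $v \in N_{\{0\}}(v)$ and that no other vertex can lie in this set.
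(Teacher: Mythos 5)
Your proof is correct; the paper cites this lemma from an earlier work without reproducing the argument, but the definitional unwinding you give ($N_{\{0\}}(v)=\{v\}$, hence $\omega_{\{0\}}(v)=f(v)$ for any bijection $f$) is the standard and essentially only proof of this fact.
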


\begin{theorem}~\label{thm:D* iff D} \cite{Abrarlinearforest}
Let $\overrightarrow{G}$ be a strongly connected graph of order $n$. If $D^* = \{0, 1, \dots, \mathrm{diam}(\overrightarrow{G})\} \setminus D$ then $\overrightarrow{G}$ is $D$-antimagic if and only if $\overrightarrow{G}$ is $D^*$-antimagic.
\end{theorem}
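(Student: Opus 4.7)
The plan is to observe that, because $\overrightarrow{G}$ is strongly connected, every distance $d(v,u)$ between vertices is finite and lies in $\{0,1,\dots,\mathrm{diam}(\overrightarrow{G})\}$ (with $d(v,v)=0$). Hence for each fixed vertex $v$, the sets $N_D(v)$ and $N_{D^*}(v)$ partition $V(\overrightarrow{G})$, since each vertex $u$ is in exactly one of them depending on whether $d(v,u)\in D$ or $d(v,u)\in D^*$.

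From this partition, the core computation is that for any bijection $f:V(\overrightarrow{G})\to\{1,\dots,n\}$ and any $v\in V(\overrightarrow{G})$,
\[
\omega_D(v)+\omega_{D^*}(v) \;=\; \sum_{x\in V(\overrightarrow{G})} f(x) \;=\; \frac{n(n+1)}{2},
\]
which is a constant independent of $v$. Consequently, for any two distinct vertices $u,v$, the equality $\omega_D(u)=\omega_D(v)$ is equivalent to $\omega_{D^*}(u)=\omega_{D^*}(v)$.

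I would then close the argument with a short two-way implication: the same bijection $f$ certifies $D$-antimagicness if and only if it certifies $D^*$-antimagicness, since the weight collisions are preserved under the complementation. This gives the theorem.

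The main obstacle is essentially bookkeeping rather than a real difficulty: one must be careful that the definition of $N_D(v)$ uses $d(v,y)$ (from $v$ to $y$), and that strong connectedness guarantees all such distances are finite and hence lie in $\{0,1,\dots,\mathrm{diam}(\overrightarrow{G})\}$, so that the partition $V(\overrightarrow{G})=N_D(v)\sqcup N_{D^*}(v)$ is genuine. Without strong connectedness, some vertex $u$ could satisfy $d(v,u)=\infty$, placing it in neither $N_D(v)$ nor $N_{D^*}(v)$, and the whole argument would break down.
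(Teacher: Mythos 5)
Your proposal is correct: strong connectedness makes every $d(v,y)$ finite and at most $\mathrm{diam}(\overrightarrow{G})$, so $N_D(v)$ and $N_{D^*}(v)$ partition $V(\overrightarrow{G})$, giving $\omega_D(v)+\omega_{D^*}(v)=\frac{n(n+1)}{2}$ for every $v$, and hence the same bijection witnesses both properties. The paper itself states this theorem as a citation from an external reference and gives no proof, but your complementation argument is the standard one such a result rests on, and it is complete as written.
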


This paper focuses on $D$-antimagic oriented 2-regular graphs. Notice that Lemma \ref{lem:handshake} directly causes any oriented 2-regular graphs to have an equal number of sinks and sources.

\begin{theorem}\label{sink=source}
For any orientation of the cycle $C_n$, the number of sinks is equal to the number of sources.
\end{theorem}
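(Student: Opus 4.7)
The plan is to exploit the very restrictive degree structure of $C_n$ together with Lemma~\ref{lem:handshake}. Since every vertex of the underlying graph $C_n$ has degree exactly $2$, in any orientation each vertex $u$ satisfies $d^+(u) + d^-(u) = 2$. Hence every vertex falls into exactly one of three classes: sources with $(d^+,d^-)=(2,0)$, sinks with $(d^+,d^-)=(0,2)$, and what I will call \emph{transit} vertices with $(d^+,d^-)=(1,1)$.

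Let $a$, $b$, $c$ denote the numbers of sources, sinks, and transit vertices respectively, so that $a+b+c=n$ and $|A(\overrightarrow{C_n})|=n$. Summing out-degrees over all vertices gives $\sum_{u} d^+(u) = 2a + c$, while summing in-degrees gives $\sum_{u} d^-(u) = 2b + c$. By Lemma~\ref{lem:handshake}, both sums equal $|A(\overrightarrow{C_n})|=n$, and in particular they are equal to each other. Comparing the two expressions yields $2a+c = 2b+c$, hence $a=b$.

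There is essentially no obstacle here; the proof is a one-line consequence of the handshake lemma once the three-way classification of vertices by $(d^+,d^-)$ is set up. The only thing worth being explicit about is that the underlying $2$-regularity of $C_n$ is what forces the classification into just these three types, since it forbids any vertex from having $d^+$ or $d^-$ larger than $2$.
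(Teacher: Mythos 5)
Your proof is correct and follows essentially the same route as the paper, which states the result as a direct consequence of the Handshake Lemma (Lemma~\ref{lem:handshake}) without writing out the details. Your classification of vertices by $(d^+,d^-)$ into sources $(2,0)$, sinks $(0,2)$, and transit vertices $(1,1)$, followed by equating $2a+c=2b+c$, is exactly the argument the paper is implicitly invoking.
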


Our study starts in Section~\ref{sec:cycle} where we investigate some properties of $D$-antimagic labeling of cycles. We also characterize the following: (1) $D$-antimagic oriented cycles, when $|D|=1$; (2) $D$-antimagic unidirectional odd cycles, when $|D|=2$; and (3) $D$-antimagic $\Theta$-oriented cycles. We conclude with Section \ref{sec:2reg}, where we characterize $D$-antimagic oriented 2-regular graphs, when $|D|=1$, and $D$-antimagic $\Theta$-oriented 2-regular graphs. 

\section{D-antimagic labeling of oriented cycles}\label{sec:cycle}

We begin by applying Theorem~\ref{thm:D* iff D} to oriented cycles and obtain the following:

\begin{lemma}
Any oriented cycle $\overrightarrow{C_n}$ is not $\{0, 1, \dots, n-1\}$-antimagic.
\end{lemma}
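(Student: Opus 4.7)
The plan is to reduce to the unidirectional case and then compute weights directly. First, I would observe that for $D = \{0, 1, \dots, n-1\}$ to be a legitimate set of finite distances in $\overrightarrow{C_n}$, the diameter must be $n-1$, which forces $\overrightarrow{C_n}$ to be the unidirectional (strongly connected) cycle: any other orientation has at least one sink, and such a sink bounds the diameter strictly below $n-1$. In any non-strong orientation the hypothesis therefore fails and the statement is vacuous.

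Next, for the unidirectional $\overrightarrow{C_n}$, I would note that every ordered pair of vertices $(v, y)$ has $d(v, y) \in \{0, 1, \dots, n-1\} = D$, so $N_D(v) = V(\overrightarrow{C_n})$ for every $v$. Consequently, for any bijection $f$,
$$\omega_D(v) \;=\; \sum_{u \in V(\overrightarrow{C_n})} f(u) \;=\; \frac{n(n+1)}{2},$$
a value independent of $v$. All weights coincide, so no $D$-antimagic labeling exists.

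A slicker alternative would be to invoke Theorem~\ref{thm:D* iff D} directly: in the unidirectional case $D$ coincides with $\{0, 1, \dots, \mathrm{diam}(\overrightarrow{C_n})\}$, so $D^* = \emptyset$ and every weight $\omega_{D^*}(v)$ is the empty sum $0$; the theorem would then transfer the failure of $D^*$-antimagic to $D$. The main subtlety, and essentially the only obstacle, is verifying that $D^* = \emptyset$ lies within the scope of the cited theorem; if one worries about this boundary case, the direct constant-weight computation above bypasses the issue entirely, which is why I would present that version in the write-up.
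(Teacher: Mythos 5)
Your proposal is correct, and your primary argument is genuinely different from the paper's. The paper states this lemma as an immediate consequence of Theorem~\ref{thm:D* iff D}: for the strongly connected (unidirectional) cycle, $D=\{0,1,\dots,n-1\}=\{0,1,\dots,\mathrm{diam}(\overrightarrow{C_n})\}$ gives $D^*=\emptyset$, every $D^*$-weight is the empty sum $0$, and the equivalence transfers the failure back to $D$ --- exactly the ``slicker alternative'' you sketch, including the boundary case $D^*=\emptyset$ that you rightly flag as the only point needing care. Your main write-up instead computes directly that $N_D(v)=V(\overrightarrow{C_n})$ for every vertex of the unidirectional cycle, so all $D$-weights equal $\frac{n(n+1)}{2}$; this is self-contained, avoids invoking the complement theorem and its empty-set edge case, and is arguably the cleaner proof. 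Both routes must separately dispose of non-unidirectional orientations, where the diameter is at most $n-2$ and $n-1$ is not a realizable distance: you make this explicit (the hypothesis on $D$ cannot be met, or equivalently, by the logic of Observation~\ref{ob:n-1 in D then C uni}, such a cycle cannot be $D$-antimagic with $(n-1)\in D$), whereas the paper leaves it implicit since Theorem~\ref{thm:D* iff D} only applies to strongly connected graphs. What the paper's approach buys is brevity and a demonstration of the utility of the complement theorem; what yours buys is an elementary, assumption-free verification that would survive even if one doubted the $D^*=\emptyset$ case of the cited theorem.
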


\subsection{Unidirectional and $\Theta$-oriented cycles}

Throughout this paper, we only consider two important orientations of a cycle. The first is 
the \textit{unidirectional $\overrightarrow{C_n}$} where the oriented cycle of order $n$ with the
vertex set $V(\overrightarrow{C_n}) = \{v_i \mid 1 \leq i \leq n\}$ has the arc set $A(\overrightarrow{C_n}) = \{(v_i, v_{i+1}) \mid 1 \leq i \leq n-1\} \cup \{(v_n, v_1)\}$. Clearly, it is the only strong orientation of a cycle and the diameter of the unidirectional $\overrightarrow{C_n}$ is $n-1$, and so 

\begin{observation}\label{ob:n-1 in D then C uni}
    If $(n-1) \in D$ and $\overrightarrow{C_n}$ is $D$-antimagic, then $\overrightarrow{C_n}$ is unidirectional.
\end{observation}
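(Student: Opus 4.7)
The plan is to derive the conclusion directly from the hypothesis $(n-1) \in D$. Since the paper's definition requires every element of $D$ to be a finite distance actually realized between two vertices of $\overrightarrow{G}$, having $(n-1) \in D$ forces the existence of a pair $u, v$ with $d(u, v) = n-1$. I will then show that only the unidirectional orientation of $C_n$ can admit such a pair; the $D$-antimagic hypothesis plays no active role in this argument, which is purely structural.

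First I would fix such a pair $u, v$ together with a shortest directed path $P$ from $u$ to $v$, which has length $n - 1$. Any directed path in $\overrightarrow{C_n}$ is a sequence of consecutive edges of the underlying cycle $C_n$, so $P$ uses $n-1$ of the $n$ edges of $C_n$ and visits every vertex exactly once; the single omitted edge must therefore join the two endpoints $u$ and $v$, so $u$ and $v$ are adjacent on $C_n$. I would then analyze the orientation of this omitted edge: an orientation $u \to v$ would yield $d(u, v) = 1$, contradicting $d(u, v) = n-1$ for $n \geq 3$, so it must be oriented $v \to u$. Together with the $n-1$ arcs of $P$, which all point in the same cyclic sense around $C_n$, every arc of $\overrightarrow{C_n}$ is then oriented consistently in that cyclic direction, which is precisely the unidirectional orientation.

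I do not foresee a genuine obstacle, since the argument is a short case analysis resting on the rigidity of paths in a cycle. The only point that merits care is the recognition that a directed path of length $n-1$ in an $n$-vertex cycle must be Hamiltonian, which collapses the problem to deciding the orientation of the unique remaining edge, and that choice is forced by the distance hypothesis.
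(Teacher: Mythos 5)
Your proof is correct and rests on the same idea the paper uses implicitly: the observation is stated as an immediate consequence of the remark that the unidirectional orientation is the only one realizing a (finite) distance of $n-1$, and your Hamiltonian-path argument simply supplies the structural details that the paper dismisses with ``clearly.'' In particular, your careful treatment of the omitted edge correctly rules out the $\Theta$-orientation (which does contain a directed Hamiltonian path but has diameter $n-2$), so nothing is missing.
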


In the next lemma, we investigate distance sets in which the unidirectional cycles are not $D$-antimagic.

\begin{lemma}
If $x$ is a factor of $n$ and $D = \{(cx) \mod n \mid c \in \mathbb{N}\}$, then the unidirectional $\overrightarrow{C_n}$ is not $D$-antimagic.
\end{lemma}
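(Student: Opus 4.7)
The plan is to exhibit, for \emph{any} bijective labeling of the unidirectional $\overrightarrow{C_n}$, a pair of distinct vertices whose $D$-weights are forced to be equal because their $D$-neighborhoods coincide as sets. This turns the question into a purely structural identity between neighborhoods, after which the antimagic property fails for reasons no choice of labeling can repair.

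To execute this, I would first unpack $D$. Since $x \mid n$, writing $n = kx$ shows that $(cx) \bmod n$ traverses exactly the $k$ multiples $\{0, x, 2x, \ldots, (k-1)x\}$ as $c$ ranges over $\mathbb{N}$, so $D$ is the set of multiples of $x$ in $\{0,1,\ldots,n-1\}$. Next, I would use the fact that in the unidirectional cycle $d(v_i, v_j) = (j-i) \bmod n$ to compute
\[
N_D(v_i) = \{\, v_{i + jx \bmod n} : 0 \leq j \leq k-1 \,\},
\]
which is precisely the residue class of $i$ modulo $x$ inside $V(\overrightarrow{C_n})$.

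The decisive step is then immediate: this residue class depends only on $i \bmod x$, so $N_D(v_i) = N_D(v_{i+x})$ as sets, and hence $\omega_D(v_i) = \omega_D(v_{i+x})$ for every bijection $f$. Because $x$ is a proper factor of $n$, the vertices $v_i$ and $v_{i+x}$ are distinct, which contradicts the requirement that all $D$-weights be pairwise different.

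I do not anticipate a genuine obstacle here; the content of the argument is essentially the observation that $D$-neighborhoods respect the subgroup $x\mathbb{Z}/n\mathbb{Z}$. The one point that warrants a sanity check is the edge case $x = n$, in which $D = \{0\}$ and Lemma \ref{lem:all 0} would actually \emph{guarantee} a $\{0\}$-antimagic labeling; so the hypothesis must be read as $x$ being a proper factor, i.e.\ $1 \leq x < n$, which is the substantive case.
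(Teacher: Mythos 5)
Your argument is correct and is essentially the paper's own proof: both show that shifting the index by $x$ permutes the set of multiples of $x$ modulo $n$, so $N_D(v_i)=N_D(v_{i+x})$ and the two weights coincide under any bijection. Your closing remark about the degenerate case $x=n$ (where $D=\{0\}$ and the conclusion fails) is a valid caveat that the paper leaves implicit.
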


\begin{proof}
Rewrite $D = \{x_c \mid x_c = (cx) \mod n, 1 \leq c \leq \frac{n}{x}\}$. 
Consider the $D$-neighborhoods of vertices $v_1$ and $v_{1+x}$ (index operations are in modulo $n$):
    \begin{align*}
        N_D(v_{1+x}) &= \{v_{(1+x)+x_1)}, v_{(1+x)+x_2)}, \dots, v_{(1+x)+x_{\frac{n}{x}})}\} \\
        &= \{v_{1+x+x}, v_{1+x+2x}, \dots, v_{1+x+x(\frac{n}{x})}\} \\
        &= \{v_{(1+x_2}, v_{1+x_3}, \dots, v_{1+x_{\frac{n}{x}}}, v_{1+x_1}\} \\
        &= N_D(v_1).
    \end{align*}
For any vertex labeling, the $D$-weights of $v_1$ and $v_{1+x}$ are identical.
\end{proof}

Let $D = \{d_1, d_2, \dots, d_t\}$, where $t \neq 0$. For an integer $k$, define $D + k = \{(d_i + k) \mod n \mid 1 \leq i \leq t\}$ . The following theorem shows how the unidirectional cycle admits many $D$-antimagic labelings.

\begin{theorem}\label{th:D iff D+k}
    A unidirectional $\overrightarrow{C_n}$ is $D$-antimagic if and only if $\overrightarrow{C_n}$ is $(D + k)$-antimagic.
\end{theorem}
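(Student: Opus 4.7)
The plan is to show that the same labeling $f$ witnesses $D$-antimagicness and $(D+k)$-antimagicness, so no new labeling has to be constructed. The proof therefore reduces to a single change-of-index identity for the weights.

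First I would unpack the $D$-neighborhood in the unidirectional cycle: since the only directed paths emanate forward along the cycle, $N_D(v_i) = \{v_{i+d} \mid d \in D\}$ with all subscripts read modulo $n$. Consequently, for any vertex labeling $f$,
\begin{equation*}
\omega_{D+k}(v_i) \;=\; \sum_{d \in D} f\bigl(v_{i+d+k}\bigr) \;=\; \omega_D(v_{i+k}).
\end{equation*}

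The map $i \mapsto i+k \pmod n$ is a permutation of $\{1,\dots,n\}$, so $\{\omega_{D+k}(v_i)\}_{i=1}^{n} = \{\omega_D(v_j)\}_{j=1}^{n}$ as multisets. Hence the $D$-weights under $f$ are pairwise distinct if and only if the $(D+k)$-weights under $f$ are pairwise distinct, which gives the equivalence in one direction. For the converse, I would simply note that $D = (D+k) + (n-k) \pmod n$, so applying the same argument with shift $n-k$ converts a $(D+k)$-antimagic labeling back into a $D$-antimagic one.

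I do not expect a genuine obstacle here; the only point requiring a little care is reading all indices modulo $n$ consistently, and making clear that the same bijection $f$ serves both roles, so that the equivalence is witnessed without having to build any new labeling.
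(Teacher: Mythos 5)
Your proposal is correct and follows essentially the same route as the paper: both rest on the shift identity $N_{D+k}(v_i)=N_D(v_{i+k})$, hence $\omega_{D+k}(v_i)=\omega_D(v_{i+k})$, so the same labeling witnesses both properties. Your explicit handling of the converse via $D=(D+k)+(n-k)$ is a small tidy addition the paper leaves implicit, but it is not a different argument.
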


\begin{proof}
Let $D = \{d_1, d_2, \dots, d_t\}$, where $t \neq 0$, and $h$ be a $D$-antimagic labeling of the unidirectional $\overrightarrow{C_n}$. 
Consider:
    \begin{align*}
        N_{D+k}(v_i) =& \{v_{i + (d_j + k)} \mid 1 \leq j \leq t\} = \{v_{(i + k) + d_j} \mid 1 \leq j \leq t\}\\ =& N_D(v_{i + k}).
    \end{align*}
This establishes a one-to-one correspondence between $N_{D+k}(v_i)$ and $N_D(v_{i+k})$. Therefore, under $h$, $\omega_D(v_{i+k}) = \omega_{D+k}(v_i)$, and $\omega_{D+k}$ remains distinct for each vertex.
\end{proof}


The second orientation under consideration is the \textit{$\Theta$-orientation}, which is defined as an orientation such that the resulting oriented cycle has exactly one pair of adjacent sink and source. Without loss of generality, let a $\Theta$-oriented $\overrightarrow{C_n}$ has the arc set $\{(v_i,v_{i+1})|1\le i\le n-1\}\cup \{(v_1,v_n)\}$. In this case, each vertex has a unique out-neighbor, except for $v_1$ that has two out-neighbors. 

Now we are ready to characterize $\{1\}$-antimagic oriented cycles, where $|D|=1$.  
\begin{theorem}
\label{lem:1-A iff Theta1,2}
An oriented cycle $\overrightarrow{C_n}$ is $\{1\}$-antimagic if and only if it is unidirectional or $\Theta$-oriented.    
\end{theorem}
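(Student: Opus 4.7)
The plan is to prove each direction separately, relying on the fact that the $\{1\}$-weight of a vertex $v$ is simply the sum of labels of its out-neighbors.

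For the $(\Leftarrow)$ direction, the unidirectional case is immediate: every vertex has a unique out-neighbor, so each weight is of the form $f(v_{i+1})$ and distinctness follows from $f$ being a bijection. For the $\Theta$-oriented case, I would work in the coordinates fixed in the paper (source $v_1$, sink $v_n$) and observe that the weights reduce to $\omega(v_n)=0$, $\omega(v_i)=f(v_{i+1})$ for $2\le i\le n-1$, and $\omega(v_1)=f(v_2)+f(v_n)$. Exhibiting any bijection with $f(v_2)+f(v_n)>n$ (for instance, by sending $v_n\mapsto n$) forces the source weight to exceed every label, automatically separating it from the sink weight $0$ and from all single-label weights.

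For the $(\Rightarrow)$ direction, I would argue the contrapositive: if $\overrightarrow{C_n}$ is neither unidirectional nor $\Theta$-oriented, then it is not $\{1\}$-antimagic. Since every sink has weight zero, the existence of two sinks is an immediate collision; by Theorem~\ref{sink=source} the same bound then applies to sources. Hence any $\{1\}$-antimagic orientation has at most one sink and at most one source. The zero-sink/zero-source case is precisely the unidirectional orientation, so the only remaining configuration to rule out is exactly one source $v_s$ and one sink $v_t$ that are non-adjacent.

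The main obstacle is exhibiting a weight collision in this last configuration. Both arcs incident to $v_t$ point inward, so the two neighbors $v_{t-1}$ and $v_{t+1}$ each have $v_t$ as an out-neighbor. Non-adjacency of $v_s$ and $v_t$ ensures that neither $v_{t-1}$ nor $v_{t+1}$ coincides with $v_s$, so both are transit vertices whose unique out-neighbor is $v_t$. Therefore $\omega(v_{t-1})=f(v_t)=\omega(v_{t+1})$ under any labeling, contradicting $\{1\}$-antimagicity and completing the case analysis.
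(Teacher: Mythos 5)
Your proposal is correct and follows essentially the same route as the paper: the converse is handled by the same two collisions (two sinks both have weight $0$; if the unique sink and source are non-adjacent, the sink's two in-neighbors have out-degree one and share the sink as their only out-neighbor), and the forward direction uses the same observations (any bijection works for the unidirectional case, and for the $\Theta$-orientation one arranges the source's weight $f(v_2)+f(v_n)$ to exceed every label, which is exactly what the paper's explicit labeling $f(v_i)=n-i+1$ achieves). The only cosmetic difference is that you invoke Theorem~\ref{sink=source} to bound the number of sources and you allow any bijection with $f(v_2)+f(v_n)>n$ rather than fixing one.
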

\begin{proof}
If $\overrightarrow{C_n}$ is neither unidirectional nor $\Theta$-oriented, then the orientation of $\overrightarrow{C_n}$ satisfies one of the following conditions: 
\begin{itemize}
    \item \textbf{There are more than one sink,} then $\overrightarrow{C_n}$ is trivially not $\{1\}$-antimagic.
    \item \textbf{The sink and the source are not neighbors,} then the sink will have two in-neighbors of out-degree one. Both share the sink as the only out-neighbor, thus $\overrightarrow{C_n}$ is not $\{1\}$-antimagic.
\end{itemize}
For necessity, if $\overrightarrow{C_n}$ is unidirectional, each vertex has a unique out-neighbor. In this case, any bijective vertex labeling is $\{1\}$-antimagic labeling. If $\overrightarrow{C_n}$ id $\Theta$-oriented, define a bijection $f$ as $f(v_i)=n-i+1$ for all $i=1,2,\dots,n$. In this case, each vertex has a distinct $\{1\}$-weight, where $\omega_{\{1\}}(v_1)=n$ and $\omega_{\{1\}}(v_i)=n-i$ for $i=2,3,\dots,n$. \end{proof}

The reasoning in the proof of Lemma \ref{lem:1-A iff Theta1,2} could be extended to study the orientations of $\overrightarrow{C_n}$ given the  minimum distance in $D$. 


\begin{lemma} \label{lem:min1}
If $\min(D)= 1$ and $\overrightarrow{C_n}$ is $D$-antimagic then $\overrightarrow{C_n}$ is either unidirectional or $\Theta$-oriented.
\end{lemma}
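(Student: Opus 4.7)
The plan is to mimic the case analysis used in the proof of Theorem~\ref{lem:1-A iff Theta1,2} and verify that each obstruction survives when $\{1\}$ is replaced by an arbitrary $D$ with $\min(D)=1$. The structural starting point is the observation that along any orientation of $C_n$ the sinks and sources alternate and, by Theorem~\ref{sink=source}, come in equal numbers. Hence an oriented cycle that is neither unidirectional nor $\Theta$-oriented falls into exactly one of two situations: either (i) it contains at least two sinks, or (ii) it contains exactly one sink and exactly one source, which are non-adjacent.

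In case (i), the key point is that $\min(D)=1$ forces $0\notin D$, so $N_D(v)=\{y\neq v : d(v,y)\in D\}$ contains only vertices reachable from $v$ by a directed path of positive length. A sink has out-degree $0$, so no vertex is reachable from it; hence $N_D(s)=\emptyset$ for every sink $s$. Any two sinks therefore share the same $D$-weight $0$, ruling out $D$-antimagicness.

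In case (ii), I look at the two in-neighbors $u,w$ of the unique sink $s$. Because sinks and sources alternate and there is exactly one of each, the two arcs of the underlying cycle joining the source to $s$ are monotone paths whose edges all point toward $s$; in particular both $u$ and $w$ have out-degree $1$, with their unique out-neighbor being $s$ itself. Since $s$ is a sink, the only vertex reachable from $u$ is $s$, at distance exactly $1$, and the same for $w$. Because $1=\min(D)\in D$, this yields $N_D(u)=N_D(w)=\{s\}$, so $\omega_D(u)=f(s)=\omega_D(w)$ under any bijection $f$, contradicting $D$-antimagicness.

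The only subtlety I foresee is making the structural claim in case (ii) precise, namely that the two in-neighbors of the unique sink necessarily have out-degree $1$. This is not automatic from the bare assumption ``sink and source non-adjacent'' and must be derived from the alternation of sinks and sources together with the fact that there is only one of each; once that is spelled out the rest of the argument is immediate.
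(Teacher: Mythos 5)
Your proposal is correct and follows essentially the same route the paper intends: the paper gives no separate proof of Lemma~\ref{lem:min1}, merely remarking that the two obstructions from the proof of Theorem~\ref{lem:1-A iff Theta1,2} (two sinks sharing weight $0$, or two in-neighbors of a non-adjacent-to-source sink sharing $N_D=\{s\}$) carry over verbatim once $0\notin D$ and $1\in D$. Your explicit verification of these two cases, including the out-degree-$1$ subtlety for the sink's in-neighbors, is exactly that extension.
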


\begin{lemma} \label{lem:min2}
Let $\overrightarrow{C_n}$ be $D$-antimagic graph. If $\min(D)\ge 2$ then $\overrightarrow{C_n}$ is unidirectional.
\end{lemma}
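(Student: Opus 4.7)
I plan to prove the contrapositive: if $\overrightarrow{C_n}$ is not unidirectional, then it fails to be $D$-antimagic whenever $\min(D)\ge 2$. A non-unidirectional orientation of a cycle is weak, so by Theorem~\ref{sink=source} it contains at least one sink (and an equal number of sources). The core idea is that the condition $\min(D)\ge 2$ starves the $D$-neighborhoods of vertices near a sink, forcing collisions in the weights.

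The first key observation I would record is that any sink $s$ satisfies $\omega_D(s)=0$: since $d^+(s)=0$, no vertex lies at positive distance from $s$, and since $0\notin D$, we get $N_D(s)=\emptyset$. This immediately settles the case of two or more sinks, since two distinct vertices would share weight $0$.

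The remaining and main case is that $\overrightarrow{C_n}$ has exactly one sink $s$, and hence (by Theorem~\ref{sink=source}) exactly one source $r$. The plan is to exhibit a second vertex with weight $0$. Let $u_1,u_2$ be the two (cycle-)neighbors of $s$; both must be in-neighbors of $s$. At most one of $u_1,u_2$ can equal the unique source $r$, so at least one of them, say $u_2$, is a regular vertex with $d^+(u_2)=1$ whose sole out-neighbor is $s$. From $u_2$ one can only reach $s$, so $d(u_2,y)=\infty$ for every $y\notin\{u_2,s\}$ and $d(u_2,s)=1$. Since $\min(D)\ge 2$, both $0$ and $1$ are excluded from $D$, and thus $N_D(u_2)=\emptyset$, giving $\omega_D(u_2)=0=\omega_D(s)$, a contradiction to $D$-antimagicness.

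I do not foresee a serious obstacle; the proof is essentially a case check driven by Theorem~\ref{sink=source}. The only point that needs careful handling is the $\Theta$-oriented case, where one of the in-neighbors of $s$ is the source $r$; the argument still succeeds because the \emph{other} in-neighbor supplies the required out-degree-one vertex whose only out-neighbor is $s$.
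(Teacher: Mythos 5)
Your proof is correct and follows essentially the same route the paper intends: the paper derives Lemma~\ref{lem:min2} by extending the sink/in-neighbor argument from Theorem~\ref{lem:1-A iff Theta1,2}, namely that a sink and an out-degree-one in-neighbor of it both receive $D$-weight $0$ once $0,1\notin D$. Your careful handling of the $\Theta$-oriented case (choosing the in-neighbor that is not the source) is exactly the detail needed to strengthen the conclusion from ``unidirectional or $\Theta$'' to ``unidirectional.''
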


From Lemmas \ref{lem:min1} and \ref{lem:min2}, we can conclude the following.  

\begin{lemma} \label{lem:min0}
Let $\overrightarrow{C_n}$ be $D$-antimagic.
If the orientation of $\overrightarrow{C_n}$ is neither unidirectional nor $\Theta$ then $\min(D)=0$.
\end{lemma}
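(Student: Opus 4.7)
The plan is to observe that this lemma is essentially a packaging of Lemmas \ref{lem:min1} and \ref{lem:min2}, so I would prove it by taking the contrapositive and dispatching two cases. Since $D$ is a set of (finite) distances, every element of $D$ is a non-negative integer, so $\min(D) \in \{0, 1, 2, \dots\}$.

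Assume $\overrightarrow{C_n}$ is $D$-antimagic and $\min(D) \neq 0$; I will show the orientation must be unidirectional or $\Theta$-oriented. Since $\min(D)\ge 1$, I split on whether equality holds. First, if $\min(D)=1$, then Lemma~\ref{lem:min1} applies directly and forces $\overrightarrow{C_n}$ to be unidirectional or $\Theta$-oriented. Second, if $\min(D)\ge 2$, then Lemma~\ref{lem:min2} applies and forces $\overrightarrow{C_n}$ to be unidirectional, which is in particular one of the two allowed orientations.

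Combining the two cases, whenever $\overrightarrow{C_n}$ is $D$-antimagic with $\min(D)\ge 1$, the orientation is unidirectional or $\Theta$-oriented. Contrapositively, if $\overrightarrow{C_n}$ is $D$-antimagic and its orientation is neither unidirectional nor $\Theta$-oriented, then $\min(D)=0$, as claimed.

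There is no real obstacle here; the substantive work is already contained in Lemmas~\ref{lem:min1} and~\ref{lem:min2}, and the only thing to be careful about is to note explicitly that distances are non-negative integers so that the case split $\min(D)=1$ versus $\min(D)\ge 2$ exhausts the possibilities left after excluding $\min(D)=0$.
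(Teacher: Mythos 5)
Your proposal is correct and matches the paper exactly: the paper derives this lemma as an immediate consequence of Lemmas~\ref{lem:min1} and~\ref{lem:min2}, which is precisely your case split on $\min(D)=1$ versus $\min(D)\ge 2$ followed by the contrapositive.
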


However, we could not identify all possible orientations for a $D$-antimagic oriented cycle when $\min(D)=0$. Thus, the following open problem:
\begin{problem}
For $\min(D)=0$, list all possible orientations such that the oriented cycle $\overrightarrow{C_n}$ is $D$-antimagic. 
\end{problem}



Since a cycle with orientation \(\Theta\) has exactly one source and one sink that are adjacent, it is weakly connected with diameter \(n-2\). In fact, \(\Theta\) is the only weak orientation of a cycle with diameter \(n-2\).

\begin{lemma}\label{lem:n-2 uni or Theta}
Let \((n-2) \in D\). If \(\overrightarrow{C_n}\) is \(D\)-antimagic, then \(\overrightarrow{C_n}\) is either unidirectional or \(\Theta\)-oriented.
\end{lemma}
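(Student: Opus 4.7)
The plan is to deduce the orientation from a diameter bound. Since Definition~1.1 requires $D$ to consist of finite distances realized in $\overrightarrow{C_n}$, the hypothesis $n-2 \in D$ guarantees the existence of vertices $u, v$ with $d(u,v) = n-2$, and in particular $\mathrm{diam}(\overrightarrow{C_n}) \ge n-2$.

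I would then split on connectivity. If $\overrightarrow{C_n}$ is strongly connected, then since the unidirectional orientation is the only strong orientation of a cycle, we are done. Otherwise $\overrightarrow{C_n}$ is weakly connected, and I invoke the statement just above the lemma: $\Theta$ is the unique weak orientation of $C_n$ having diameter $n-2$. Combined with the observation that every weak orientation has diameter at most $n-2$ (diameter $n-1$ forces a Hamiltonian directed path and hence strong connectivity, i.e.\ unidirectionality), the inequality $\mathrm{diam}(\overrightarrow{C_n}) \ge n-2$ forces equality, and so $\overrightarrow{C_n}$ is $\Theta$-oriented.

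Should the invoked uniqueness claim for $\Theta$ need a self-contained justification, it would go through Theorem~\ref{sink=source}. Letting $k \ge 1$ denote the common number of sources and sinks in a weak orientation, the cycle decomposes into $2k$ maximal directed paths of positive integer lengths summing to $n$. A direct inspection of the reachability structure, noting in the $k=1$ subcase that the lone source reaches the lone sink along either of the two directed paths between them so their distance is the \emph{minimum} of the two path lengths, yields $\mathrm{diam}(\overrightarrow{C_n}) \le n-2$, with equality iff $k = 1$ and the source is adjacent to the sink; this is exactly the $\Theta$-orientation.

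The main obstacle is the diameter bookkeeping in this $k = 1$ weak subcase: one must be careful that source-to-sink distance is the minimum, not the maximum, of the two directed path lengths between them, which is precisely why the non-adjacent 1-source-1-sink orientations drop to diameter at most $n-3$ despite having a directed path of length $n-2$. Once this min-vs-max subtlety is handled, the rest of the case analysis is elementary.
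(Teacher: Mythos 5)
Your strategy is exactly the one the paper (implicitly) relies on: the lemma is stated without proof immediately after the assertion that $\Theta$ is the only weak orientation of a cycle with diameter $n-2$, and your argument makes that assertion precise via the decomposition into $2k$ maximal directed paths and combines it with the observation that $n-2\in D$ forces $\mathrm{diam}(\overrightarrow{C_n})\ge n-2$. The reduction to a diameter bound, the split into strong versus weak orientations, and the min-versus-max point about the source-to-sink distance are all correctly identified and are the right way to fill in the omitted details.

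There is, however, a genuine gap in your equality analysis at $n=4$. In the $k=1$ non-adjacent subcase the two source-to-sink paths have lengths $a,b\ge 2$ with $a+b=n$, and since every finite distance is realized inside a single maximal directed path, the diameter equals $\max\bigl(\min(a,b),\,a-1,\,b-1\bigr)$. Your claim that such orientations drop to diameter at most $n-3$ requires $\min(a,b)\le n-3$, which holds for $n\ge 5$ but fails for $n=4$, where $a=b=2$ gives diameter $2=n-2$. This is not just a defect of the write-up: the orientation $v_1\to v_2\to v_3\leftarrow v_4\leftarrow v_1$ of $C_4$ is neither unidirectional nor $\Theta$-oriented, has $2=d(v_1,v_3)$ as a realized distance, and is $\{0,2\}$-antimagic (label $v_1,v_2,v_3,v_4$ with $3,1,2,4$ to get $\{0,2\}$-weights $5,1,2,4$). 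So both the paper's uniqueness assertion about $\Theta$ and the lemma itself are false at $n=4$; your proof is complete once $n\ge 5$ is assumed (the case $n=3$ is also fine, since there every weak orientation is $\Theta$), and the $n=4$ anomaly should be flagged to the authors rather than papered over.
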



The next theorem provides all possible $D$ so that a \(\Theta\)-oriented cycle has a \(D\)-antimagic labeling. 

\begin{theorem}\label{the:all D for theta}
Let $n\geq 3$.  A $\Theta$-oriented $\overrightarrow{C_n}$ is $D$-antimagic if and only if $\min(D)\le 1$.  
\end{theorem}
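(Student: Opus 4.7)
The plan has two directions. For necessity the argument is immediate: a $\Theta$-orientation is not unidirectional, so the contrapositive of Lemma~\ref{lem:min2} forces $\min(D)\le 1$ whenever a $\Theta$-oriented $\overrightarrow{C_n}$ is $D$-antimagic.

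For sufficiency I would exhibit the single labeling $f(v_i)=n-i+1$ and prove it is $D$-antimagic whenever $\min(D)\le 1$. First I would record the distance structure of the $\Theta$-orientation: for $i\ge 2$ we have $d(v_i,v_j)=j-i$ when $j>i$ and $\infty$ otherwise, while $d(v_1,v_j)=j-1$ for $2\le j\le n-1$ together with the shortcut $d(v_1,v_n)=1$. These formulas pin down $N_D(v_i)$ explicitly and yield a closed-form expression for each weight $\omega_D(v_i)$.

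The core computation is a telescoping difference. For $2\le i\le n-1$, direct calculation (using Iverson brackets) should reduce the cancellation to
\[
\omega_D(v_i)-\omega_D(v_{i+1}) = [0\in D]+\bigl|D\cap[1,n-i-1]\bigr|+[n-i\in D],
\]
which is at least $1$ under $\min(D)\le 1$: either $0\in D$ handles it through the first term, or $1\in D$ places $1$ in $D\cap[1,n-i-1]$ for $i\le n-2$ and ensures $n-i=1\in D$ at the boundary $i=n-1$. A parallel calculation should give $\omega_D(v_1)-\omega_D(v_2)=[0\in D]+|D\setminus\{0\}|+[1\in D]\ge 1$, handling the source. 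Together these produce the strictly decreasing chain $\omega_D(v_1)>\omega_D(v_2)>\cdots>\omega_D(v_n)$, so all weights are distinct.

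The hard part lies in the asymmetries of the $\Theta$-orientation. The source $v_1$ has two out-neighbors $v_2$ and $v_n$ together with the shortcut $d(v_1,v_n)=1$, so its weight formula differs from those of the interior vertices, while the sink $v_n$ has empty out-neighborhood, which collapses $\omega_D(v_n)$ to $0$ unless $0\in D$. These asymmetries force a separate treatment of the comparison at $v_1$ and of the boundary case $i=n-1$; in each spot the positivity of the difference uses exactly that $0$ or $1$ lies in $D$, which is precisely the hypothesis, and this is what makes the necessary condition tight.
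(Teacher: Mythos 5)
Your proposal is correct and follows essentially the same route as the paper: necessity via Lemma~\ref{lem:min2}, and sufficiency via the same labeling $f(v_i)=n-i+1$ shown to yield strictly decreasing weights $\omega_D(v_1)>\omega_D(v_2)>\cdots>\omega_D(v_n)$. The only difference is presentational: you verify the monotonicity through a single telescoping difference formula with Iverson brackets, whereas the paper writes out closed-form weights case by case ($d_0=0$ versus $d_0=1$, after dispatching $|D|=1$ separately).
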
\vspace{-1em}
\begin{proof} Based on Lemma \ref{lem:min2}, we know that if a $\Theta$-oriented $\overrightarrow{C_n}$ is $D$-antimagic then $\min(D)\le 1$. 

From Lemma \ref{lem:all 0} and Theorem \ref{lem:1-A iff Theta1,2}, $\overrightarrow{C_n}$ is $\{0\}$- and $\{1\}$-antimagic. Thus we only need to proceed with $|D|\ge 2$. Let $D=\{d_0, d_1,\ldots, d_p\}$, where $p \geq 2$ and $0\le d_0<d_1<\ldots<d_p\le n-2$. Define a bijection $g:V(\overrightarrow{C_n})\rightarrow\{1, 2, \ldots, n\}$, with $g(v_i)=n-i+1$, for $i=1, 2, \ldots, n$. We shall prove that $g$ is $D$-antimagic, by considering two cases based on  $\min{(D)}=d_0$.
    
\noindent \textbf{Case 1 : $d_0=0$.} The $D$-neighborhoods and $D$-weights of $v_i$ in $\overrightarrow{C_n}$ are:
\begin{itemize}
\item \textbf{For $i=1$:} If $d_1\ne 1$ then $ N_D(v_1) = \{v_{1+d_t}|0 \le t \le p\}$ and $\omega_D(v_i)
=(p+1)(n-i+1)-\sum_{t=1}^p d_t$; if $d_1=1, N_D(v_1) = \{v_{1+d_t}|0 \le t \le p\}\cup\{v_n\}$ and $\omega_D(v_i)
=(p+1)(n-i+1)-\sum_{t=2}^p d_t$.
\item \textbf{For $2 \le i \le n-d_p$:} $N_D(v_i) = \{v_{i+d_t}|0 \le t \le p\}$ and $\omega_D(v_i)
=(p+1)(n-i+1)-\sum_{t=1}^pd_t$.
\item \textbf{For $n-d_{t_0+1}+1 \le i \le n-d_{t_0}\text{ and }0\le t_0\le p-1$,}   $N_D(v_i) = \{v_{i+d_t}|0 \le t \le t_0\}$ and $\omega_D(v_i)
=(t_0+1)(n-i+1)-\sum_{t=1}^{t_0} d_t$.
\end{itemize} 
\noindent \textbf{Case 2 : $d_0=1$.} The $D$-neighborhoods and $D$-weights of $v_i$ are as follows:
\begin{itemize}
\item  \textbf{For $i=1$,} $N_D(v_i) = \{v_{1+d_t}|0 \le t \le d_p\}\cup\{v_n\}$ and $\omega_D(v_1)=(p+1)n-\sum_{t=1}^pd_t$.\vspace{-0.5em}
\item \textbf{For $2 \le i \le n-d_p$,} $N_D(v_i) = \{v_{i+d_t}|0 \le t \le p\}$ and $\omega_D(v_i)=(p+1)(n-i+1)-\sum_{t=0}^pd_t$.  \vspace{-0.5em}
\item \textbf{For $n-d_{t_0+1}+1\le i \le n-d_{t_0}, 0\le t_0\le p-1$,} $N_D(v_i) = \{v_{i+d_t}|0 \le t \le t_0\}$ and $\omega_D(v_i)=(t_0+1)(n-i+1)-\sum_{t=0}^{t_0} d_t$.\vspace{-0.5em} 
\item \textbf{For $i=n$,} $N_D(v_n) = \emptyset$ and $\omega_D(v_n)=0$.
\end{itemize}
In both cases, $\omega_D(v_i)>\omega_D(v_{i+1})$, for $1\le i\le n-1$. 

Examples of the labelings are provided in Figure \ref{fig:C10 thets 0,1,3,4 dan 1,3,4,6}.\end{proof}

\begin{figure}[ht]
    \centering    \includegraphics[width=0.8\linewidth]{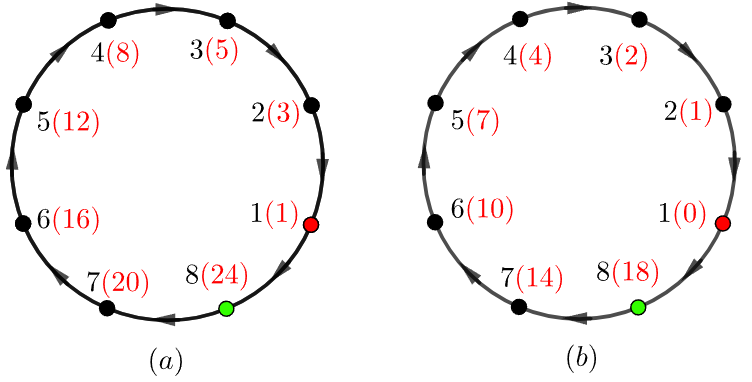}
    \caption{(a) $\{0,1,3,4\}$- and (b) $\{1,3,4,6\}$-antimagic labelings of $\Theta$ oriented of $\overrightarrow{C_{10}}$.}
    \label{fig:C10 thets 0,1,3,4 dan 1,3,4,6}
\end{figure}

\subsection{$D$-antimagic unidirectional cycles, with $|D|=1$} \label{sub:D=1}

Recall that Theorem \ref{lem:1-A iff Theta1,2} characterizes $\{1\}$-antimagic oriented cycles.  The next theorem completes our characterization of $D$-antimagic oriented cycles, with $|D|=1$.

\begin{theorem}\label{lem:uniC iff k-Ant}
For $2 \leq k \leq n-1$, the oriented cycle $\overrightarrow{C_n}$ is $\{k\}$-antimagic if and only if it is unidirectional.
\end{theorem}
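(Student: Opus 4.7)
The plan is to handle the two implications separately, both of which are brief.

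For the necessity direction ($\{k\}$-antimagic implies unidirectional), note that since $k \ge 2$, the set $D = \{k\}$ satisfies $\min(D) \ge 2$, and the conclusion is immediate from Lemma \ref{lem:min2}, which has already been established. No additional work is needed here.

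For the sufficiency direction (unidirectional implies $\{k\}$-antimagic), I would work with the unidirectional $\overrightarrow{C_n}$ having arc set $\{(v_i, v_{i+1}) : 1 \le i \le n-1\} \cup \{(v_n, v_1)\}$. Since every vertex has out-degree exactly one, the unique directed walk leaving $v_i$ forces the distance from $v_i$ to any $v_j \ne v_i$ to be $(j - i) \bmod n$. Consequently, for each $k$ with $1 \le k \le n-1$, the $\{k\}$-neighborhood is a singleton,
\[
N_{\{k\}}(v_i) = \{v_{((i+k-1) \bmod n) + 1}\},
\]
where indices are taken in $\{1, 2, \ldots, n\}$.

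It then remains to observe that any bijection $f: V(\overrightarrow{C_n}) \to \{1, 2, \ldots, n\}$ serves as a $\{k\}$-antimagic labeling: the weight of $v_i$ is $\omega_{\{k\}}(v_i) = f(v_{((i+k-1)\bmod n)+1})$, and since both the index shift $i \mapsto (i+k) \bmod n$ and $f$ are bijections, the map $i \mapsto \omega_{\{k\}}(v_i)$ is a bijection from $\{1,\dots,n\}$ to itself, so in particular all weights are pairwise distinct. I do not anticipate any real obstacle: one direction is a one-line citation and the other reduces to the elementary observation that in a strongly connected oriented cycle each vertex has a unique vertex at each realizable distance, so distinctness of weights follows automatically from bijectivity of $f$.
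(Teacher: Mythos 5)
Your proposal is correct and follows essentially the same route as the paper: necessity via Lemma \ref{lem:min2}, and sufficiency by noting that in a unidirectional cycle each vertex has a singleton $\{k\}$-neighborhood obtained by a cyclic index shift, so the weights are a permutation of the labels. The only cosmetic difference is that the paper fixes the specific bijection $f(v_i)=i$ while you observe that any bijection works, which is a slightly stronger but equally immediate statement.
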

\begin{proof} By Lemma \ref{lem:min2}, we only need to construct a $\{k\}$-antimagic labeling for a unidirectional cycle. Consider a bijection $f(v_i) = i$ for $1 \leq i \leq n$. Since each vertex has a unique $\{k\}$-neighbor given by $N_{\{k\}}(v_i) = \{v_{(i+k) \mod n}\}$, 
then each vertex has a distinct $\{k\}$-weight.\end{proof}

Applying Theorem \ref{thm:D* iff D} to Theorems \ref{lem:1-A iff Theta1,2}, \ref{lem:all 0}, and the previous theorem, we obtain the following:  

\begin{corollary}
For $0 \leq k \leq n-1$, the unidirectional cycle $\overrightarrow{C_n}$ is $\{0, 1, \dots, n-1\} \setminus \{k\}$-antimagic.   
\end{corollary}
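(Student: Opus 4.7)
The plan is to apply Theorem \ref{thm:D* iff D} to the complementary singleton set $D = \{k\}$. Since the unidirectional $\overrightarrow{C_n}$ is strongly connected with diameter exactly $n-1$, the full distance range is $\{0, 1, \dots, n-1\}$, so the complement of $D = \{k\}$ within this range is precisely the set in the statement, namely $D^* = \{0, 1, \dots, n-1\} \setminus \{k\}$.

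Next I would verify, for every admissible $k$, that the unidirectional $\overrightarrow{C_n}$ is $\{k\}$-antimagic, splitting into three cases. For $k = 0$, this is immediate from Lemma \ref{lem:all 0}. For $k = 1$, the unidirectional cycle satisfies the hypothesis of Theorem \ref{lem:1-A iff Theta1,2}, so it is $\{1\}$-antimagic. For $2 \le k \le n-1$, Theorem \ref{lem:uniC iff k-Ant} gives $\{k\}$-antimagicness of the unidirectional cycle directly.

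Having established that $\overrightarrow{C_n}$ is $\{k\}$-antimagic for every $k$ in the stated range, I invoke Theorem \ref{thm:D* iff D} once, which yields that $\overrightarrow{C_n}$ is $\{0,1,\dots,n-1\}\setminus\{k\}$-antimagic, completing the proof.

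There is essentially no obstacle here, since the corollary is an immediate bookkeeping consequence of the three cited results; the only thing to be careful about is making sure the diameter of the unidirectional cycle really is $n-1$ (needed so the complement in the diameter-indexed set matches the set in the statement) and that strong connectivity (required by Theorem \ref{thm:D* iff D}) holds, both of which are guaranteed by the unidirectional orientation.
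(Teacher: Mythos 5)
Your proposal is correct and follows exactly the paper's route: the paper obtains this corollary by applying Theorem~\ref{thm:D* iff D} to Lemma~\ref{lem:all 0} (the case $k=0$), Theorem~\ref{lem:1-A iff Theta1,2} (the case $k=1$), and Theorem~\ref{lem:uniC iff k-Ant} (the cases $2 \le k \le n-1$), using that the unidirectional cycle is strongly connected with diameter $n-1$. Your added care about the diameter and strong-connectivity hypotheses is appropriate but raises no issues.
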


\subsection{$D$-antimagic unidirectional cycles, with $|D|=2$} \label{sub:D=2}

After characterizing $D$-antimagic oriented cycles for $|D|=1$, the next step is to consider distance sets of cardinality at least 2. However, for $|D|=2$, we only managed to characterize $D$-antimagic labelings for unidirectional odd cycles. 

We start by considering the $D$-antimagic unidirectional cycles with $D=\{0,k\}$, where $1\leq k\leq 3$.

\begin{lemma} \label{lem:{0,1}cycle}
For $n \geq 3$, the unidirectional cycle $\overrightarrow{C_n}$ is $\{0,1\}$-antimagic.
\end{lemma}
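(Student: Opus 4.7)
The plan is to exhibit an explicit $\{0,1\}$-antimagic bijection for $\overrightarrow{C_n}$. Since in the unidirectional cycle each vertex $v_i$ has $N_{\{0,1\}}(v_i)=\{v_i,v_{i+1}\}$ (indices taken modulo $n$), the task reduces to finding a bijection $f:V(\overrightarrow{C_n})\to\{1,2,\dots,n\}$ for which all $n$ cyclic consecutive sums $f(v_i)+f(v_{i+1})$ are pairwise distinct. I would split the argument by the parity of $n$.

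For odd $n$, I would simply use the identity labeling $f(v_i)=i$. The weights $\omega_{\{0,1\}}(v_i)=2i+1$ for $1\le i\le n-1$ form a strictly increasing sequence of odd integers, while $\omega_{\{0,1\}}(v_n)=n+1$ is even (because $n$ is odd), so it cannot coincide with any other weight.

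For even $n=2m$ the identity collides at $i=m$ and $i=n$, so I would use a zigzag bijection: put the odd labels $1,3,\dots,2m-1$ in increasing order on $v_1,v_2,\dots,v_m$, and put the even labels $2m,2m-2,\dots,2$ in decreasing order on $v_{m+1},v_{m+2},\dots,v_{2m}$. A direct computation then groups the weights into four blocks distinguished by residues modulo $4$: the sums on $v_1,\dots,v_{m-1}$ are $4,8,\dots,4m-4$ (class $0$); the sums on $v_{m+1},\dots,v_{2m-1}$ are $4m-2,4m-6,\dots,6$ (class $2$); and the two remaining weights are $\omega_{\{0,1\}}(v_m)=(2m-1)+2m=4m-1$ and $\omega_{\{0,1\}}(v_{2m})=2+1=3$, both in class $3$ but unequal as long as $m\ge 2$, i.e.\ $n\ge 4$.

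The main obstacle is the even case, where the naive identity labeling creates the clash $\omega(v_m)=\omega(v_n)=n+1$. The zigzag construction resolves this by forcing the two candidates for collision into different residue classes modulo $4$ from the bulk of the weights, after which only the easy check $4m-1\neq 3$ remains.
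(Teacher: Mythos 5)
Your proof is correct and takes essentially the same approach as the paper: exhibit an explicit bijection, split on the parity of $n$, and handle odd $n$ with the identity labeling and the observation that $\omega_{\{0,1\}}(v_n)=n+1$ is the unique even weight. The only divergence is the even case: the paper uses a minimal perturbation of the identity, swapping the labels of $v_1$ and $v_2$ (so $f_1(v_1)=2$, $f_1(v_2)=1$, $f_1(v_i)=i$ otherwise), which gives weights $3,4,7,9,\dots,2n-1,n+2$ and requires only the check $n+2\ge 6$; your zigzag labeling is a heavier construction, but the residue-classes-modulo-$4$ analysis is complete and correct, including the boundary verification $4m-1\neq 3$ for $n\ge 4$. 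Both constructions are valid; the paper's is slightly more economical to verify.
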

\begin{proof}
In a unidirectional cycle $\overrightarrow{C_n}$, the $\{0,1\}$-neighborhood of each vertex are $N_{\{0,1\}}(v_n) = \{v_n, v_1\}$ and $N_{\{0,1\}}(v_i) = \{v_i, v_{i+1}\}$, for $1 \leq i \leq n-1$.

We consider two cases based on the parity of $n$.

\textbf{Case 1: $n \geq 3$ is odd}, assign a bijection $f: V(\overrightarrow{C_n}) \to \{1,2,\dots,n\}$ such that $f(v_i) = i$, for $1\le i\le n$. The $\{0,1\}$-weights of vertices are 
$\omega_{\{0,1\}}(v_i)= 2i + 1, \text{ for }1 \leq i \leq n-1 \text{ and }  \omega_{\{0,1\}}(v_n) = n + 1$.
   

\textbf{Case 2: $n \geq 4$ is even}, define a bijection $f_1$ such that $f_1(v_1) = 2$, $f_1(v_2) = 1$, and $f_1(v_i) = i$, for $3 \leq i \leq n$. The distinct $\{0,1\}$-weights are    
$\omega_{\{0,1\}}(v_1)  = 3,
\omega_{\{0,1\}}(v_2) = 4, 
\omega_{\{0,1\}}(v_i)= 2i + 1 \text{ for } 3 \leq i \leq n-1,\text{ and }
\omega_{\{0,1\}}(v_n)  = n + 2$. 

Figure \ref{fig:uni C8 0,1} gives examples of these labelings.\end{proof}



\begin{figure}[h]
    \centering    \includegraphics[width=0.5\linewidth]{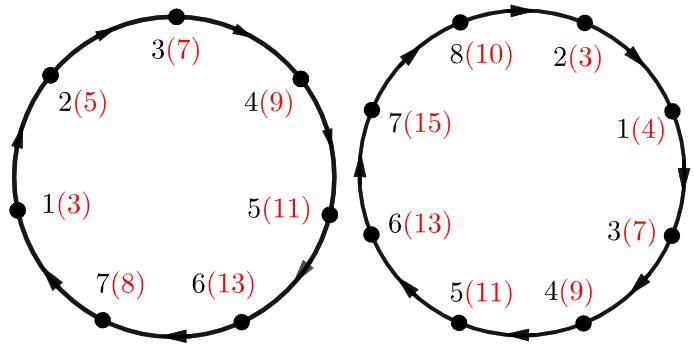}
    \caption{$\{0,1\}$-antimagic labelings of  unidirectional $\overrightarrow{C_7}$ and $\overrightarrow{C_8}$.}
    \label{fig:uni C8 0,1}
\end{figure}


\begin{lemma} \label{lem:{0,2}cycle}
For $n \geq 3$ and $n \neq 4$, the unidirectional  $\overrightarrow{C_n}$ is $\{0,2\}$-antimagic.
\end{lemma}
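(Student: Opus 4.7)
The plan is to construct an explicit bijection $f: V(\overrightarrow{C_n}) \to \{1, 2, \ldots, n\}$, exploiting that $N_{\{0,2\}}(v_i) = \{v_i, v_{i+2}\}$ in the unidirectional cycle (indices modulo $n$), so $\omega_{\{0,2\}}(v_i) = f(v_i) + f(v_{i+2})$.

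For odd $n$, I would use the natural labeling $f(v_i) = i$. This yields $\omega_{\{0,2\}}(v_i) = 2i+2$ for $1 \leq i \leq n-2$ (all even) together with $\omega_{\{0,2\}}(v_{n-1}) = n$ and $\omega_{\{0,2\}}(v_n) = n+2$ (both odd since $n$ is odd), so the weights are pairwise distinct with no further work.

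The harder case is $n$ even with $n \geq 6$. Here the map $i \mapsto i+2 \pmod{n}$ partitions the vertices into two classes (odd-indexed and even-indexed), each carrying an independent $m$-cycle of weights where $m = n/2$. I would assign labels $\{1, \ldots, m\}$ to the odd-indexed class and $\{m+1, \ldots, 2m\}$ to the even-indexed class, so the two resulting weight ranges are automatically disjoint: the former lies in $[3, 2m-1]$ and the latter in $[2m+3, 4m-1]$. Within each class, I still need to arrange the labels on a directed $m$-cycle so that all $m$ cyclic adjacent sums are distinct---essentially a distance antimagic labeling of the unidirectional $\overrightarrow{C_m}$.

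This inner labeling is where the main obstacle lies. When $m$ is odd, the in-order arrangement $(1, 2, \ldots, m)$ works, because the only non-arithmetic wrap-around sum is $m+1$, which is even and hence disjoint from the odd sums $3, 5, \ldots, 2m-1$. When $m$ is even, however, the same wrap-around sum is odd and collides with the progression, so I would slightly perturb the ordering---for instance, transpose the last two labels to obtain $(1, 2, \ldots, m-2, m, m-1)$---and verify by a short parity argument that the three resulting end sums $2m-2$, $2m-1$, and $m$ are pairwise distinct and disjoint from the progression $3, 5, \ldots, 2m-5$. The only failure of this perturbation occurs at $m = 2$, which corresponds precisely to the excluded case $n = 4$.
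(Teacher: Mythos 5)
Your proposal is correct. For odd $n$ it coincides exactly with the paper's proof (the natural labeling $f(v_i)=i$ with the same even/odd weight separation). For even $n\ge 6$ you take a genuinely different route: the paper simply perturbs the natural labeling by setting $f(v_1)=n$, $f(v_n)=1$ and checks directly that the five resulting weight values $n+3$, $3$, $2i+2$ $(2\le i\le n-3)$, $n-1$, $2n-1$ are pairwise distinct for $n\ne 4$, whereas you decompose the cycle under $i\mapsto i+2$ into two index-parity classes of size $m=n/2$, assign the label blocks $\{1,\dots,m\}$ and $\{m+1,\dots,2m\}$ to separate the weight ranges, and reduce to arranging $\{1,\dots,m\}$ on a directed $m$-cycle with distinct cyclic adjacent sums. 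That inner problem is precisely the paper's $\{0,1\}$-antimagic condition on the unidirectional $\overrightarrow{C_m}$ (Lemma~\ref{lem:{0,1}cycle}), not the $\{1\}$-distance-antimagic condition as you phrase it, but since you supply and verify the explicit arrangements $(1,\dots,m)$ and $(1,\dots,m-2,m,m-1)$ this is only a naming slip. Your decomposition is structurally more illuminating---it isolates why $n=4$ must be excluded (a $2$-cycle forces equal adjacent sums) and reuses the $\{0,1\}$ machinery---at the cost of a two-level argument; the paper's single transposition is shorter but purely computational.
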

\begin{proof}
    The $\{0,2\}$-neighborhood of each vertex in $\overrightarrow{C_n}$ is as follows:
    \begin{align*}
        N_{\{0,2\}}(v_i) &= \{v_i, v_{i+2}\}, \quad \text{ for } 1 \leq i \leq n-2,\\
        N_{\{0,2\}}(v_{n-1}) &= \{v_{n-1}, v_1\}, \text{ and}\\
        N_{\{0,2\}}(v_n) &= \{v_n, v_2\}.
    \end{align*}

    To construct the $\{0,2\}$-antimagic labeling, we consider the following two cases: 

    \textbf{Case 1: $n \geq 3$ is odd}, define a bijection $f: V(\overrightarrow{C_n}) \to \{1, 2, \dots, n\}$ such that $f(v_i) = i$, for $1\le i\le n$. Based on $f$ and  the $\{0, 2\}$-neighborhoods, each vertex has distinct $\{0, 2\}$-weight where     
       $ \omega_{\{0,2\}}(v_i)=2i + 2, \text{ for } 1 \leq i \leq n-2,
        \omega_{\{0,2\}}(v_{n-1})=n,\text{ and }
        \omega_{\{0,2\}}(v_n) =n + 2.$


    \textbf{Case 2: $n \geq 4$ is even}, define a bijection $f'$ by $f'(v_1) = n$, $f'(v_n) = 1$, and $f'(v_i) = i$, for $2 \leq i \leq n-1$. So, the $\{0, 2\}$-weights are differs for all vertices, given by:
    \begin{align*}
        \omega_{\{0,2\}}(v_1) &= n + 3,\\
        \omega_{\{0,2\}}(v_n) &= 3,\\
        \omega_{\{0,2\}}(v_i) &= 2i + 2, \text{ for } 2 \leq i \leq n-3,\\
        \omega_{\{0,2\}}(v_{n-2}) &= n-1, \text{and}\\
        \omega_{\{0,2\}}(v_{n-1}) &= 2n-1.\end{align*}
        
Examples of the labelings are given in Figure \ref{fig:C7,8 uni 0,2}.\end{proof}



\begin{figure}[ht]
    \centering    \includegraphics[width=0.65\linewidth]{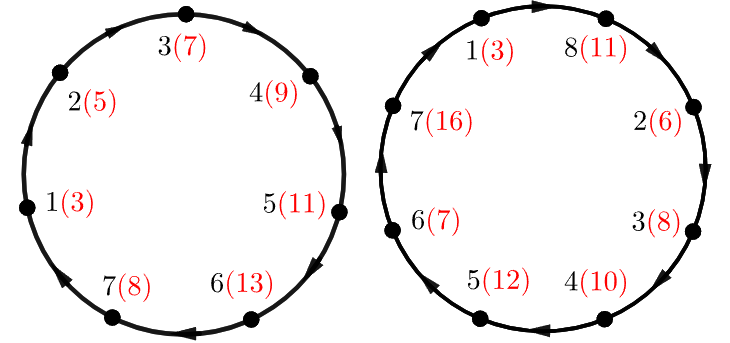}
    \caption{$\{0,2\}$-antimagic labelings of unidirectional $\overrightarrow{C_7}$ and $\overrightarrow{{C_8}}$.}
    \label{fig:C7,8 uni 0,2}
\end{figure}

\begin{lemma} \label{lem:{0,3}cycle}
For $n \geq 4$ and $n \neq 6$, the unidirectional $\overrightarrow{C_n}$ is $\{0,3\}$-antimagic.
\end{lemma}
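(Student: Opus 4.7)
The plan is to reduce the $\{0,3\}$-antimagic labeling problem to the $\{0,1\}$-antimagic labeling problem already solved in Lemma~\ref{lem:{0,1}cycle}, by exploiting the orbit structure of the shift-by-three permutation $\sigma(v_i) = v_{(i+3)\bmod n}$. The key observation is that $N_{\{0,3\}}(v_i) = \{v_i,\sigma(v_i)\}$, so on any $\sigma$-orbit $w_1 \to w_2 \to \cdots \to w_m \to w_1$ of length $m$, distinctness of the $\{0,3\}$-weights coincides exactly with distinctness of the $\{0,1\}$-weights on the unidirectional $\overrightarrow{C_m}$ with vertex sequence $w_1,\ldots,w_m$. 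I would split the proof according to $d = \gcd(n,3)$.

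Case $d = 1$ (i.e., $3 \nmid n$): Here $\sigma$ is a single $n$-cycle, so reparametrizing $w_j = v_{1+3(j-1)\bmod n}$ converts the task into producing a $\{0,1\}$-antimagic labeling of $\overrightarrow{C_n}$, which Lemma~\ref{lem:{0,1}cycle} delivers for every $n \geq 3$. Transporting that labeling back along $j \mapsto 1+3(j-1)\bmod n$ gives the desired bijection $f$. This covers, in particular, the small case $n = 4$.

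Case $d = 3$ (i.e., $3 \mid n$): Since $n \geq 4$ and $n \neq 6$, we have $n \geq 9$, so each of the three orbits $O_0, O_1, O_2$ has length $n/3 \geq 3$. I would assign to orbit $O_r$ the consecutive block of labels $\{rn/3 + 1,\ldots,(r+1)n/3\}$, and inside each orbit use a shifted copy of the $\{0,1\}$-antimagic labeling of $\overrightarrow{C_{n/3}}$ supplied by Lemma~\ref{lem:{0,1}cycle}. Intra-orbit distinctness is then immediate, and inter-orbit distinctness follows from a one-line range argument: the weights on $O_r$ all lie in $[2rn/3 + 3,\, 2(r+1)n/3 - 1]$, and the inequality $2(r+1)n/3 - 1 < 2(r+1)n/3 + 3$ shows these three intervals are pairwise disjoint.

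The main obstacle is conceptual rather than computational: verifying that $n = 6$ is exactly the case in which the orbit decomposition fails to feed into Lemma~\ref{lem:{0,1}cycle}, because the orbits would have length $2 < 3$. For $n = 6$ each $\sigma$-orbit is a pair $\{v_i,v_{i+3}\}$, which forces $\omega_{\{0,3\}}(v_i) = f(v_i)+f(v_{i+3}) = \omega_{\{0,3\}}(v_{i+3})$ for any bijection $f$; so $n = 6$ is genuinely an exception, and among $n \geq 4$ with $3 \mid n$ it is the unique one. Everything else is routine: the bookkeeping for the reparametrization in Case~1 and the disjointness of weight intervals in Case~2.
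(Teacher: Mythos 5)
Your proof is correct, and it takes a genuinely different route from the paper. The paper argues by parity of $n$: for odd $n$ the identity labeling works directly, while for even $n$ it treats $n=4$ ad hoc, exhibits explicit labelings for $n=8,10,12$ in a figure, and gives a separate hand-crafted labeling (modifying the identity on the last eight vertices) for even $n\ge 14$, verifying all the resulting weights. Your reduction via the orbit decomposition of the shift $\sigma(v_i)=v_{(i+3)\bmod n}$ replaces all of that with two structural cases governed by $\gcd(n,3)$: a single orbit lets you pull back any $\{0,1\}$-antimagic labeling of $\overrightarrow{C_n}$ from Lemma~\ref{lem:{0,1}cycle}, and three orbits of length $n/3\ge 3$ are handled by block labels plus the interval-disjointness observation (the weights on the block $\{rn/3+1,\dots,(r+1)n/3\}$ are sums of two distinct labels from that block, hence confined to $[2rn/3+3,\,2(r+1)n/3-1]$). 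What your approach buys is threefold: it eliminates the reliance on figures as proof for $n=8,10,12$; it explains structurally why $n=6$ is the unique exception (orbits of length $2$ force $\omega_{\{0,3\}}(v_i)=\omega_{\{0,3\}}(v_{i+3})$, whereas the paper only notes the coincidence of neighborhoods); and it generalizes immediately to $\{0,k\}$ for any $k$ with $n/\gcd(n,k)\ge 3$, which would subsume Lemma~\ref{lem:{0,2}cycle} and feed naturally into the theorem on odd unidirectional cycles with $|D|=2$. The paper's approach, by contrast, yields fully explicit closed-form labelings, which may be preferable for constructive or algorithmic purposes.
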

\begin{proof}
Based on the orientation of unidirectional $\overrightarrow{C_n}$, the $\{0,3\}$-neighborhoods of each vertex are:\vspace{-1em}
    \begin{align*}
        N_{\{0,3\}}(v_i) &= \{v_i, v_{i+3}\}, \text{ for } 1 \leq i \leq n-3, \\
        N_{\{0,3\}}(v_{n-2}) &= \{v_{n-2}, v_{1}\}, \\
        N_{\{0,3\}}(v_{n-1}) &= \{v_{n-1}, v_{2}\}, \text{ and} \\
        N_{\{0,3\}}(v_n) &= \{v_n, v_{3}\}.
    \end{align*}

    This proof will be divided into two cases: 

    \textbf{Case 1: $n \geq 5$ is odd.}  
    Let $f: V(\overrightarrow{C_n}) \to \{1, 2, \dots, n\}$ be a bijection such that $f(v_i) = i$, for $1\le i\le n$. Then all $\{0,3\}$-weights are unique and given by:\vspace{-1em}
    \begin{align*}
        \omega_{\{0,3\}}(v_i) &= 2i + 3, \text { for }1 \leq i \leq n-3, \\
        \omega_{\{0,3\}}(v_{n-2}) &= n-1, \\
        \omega_{\{0,3\}}(v_{n-1}) &= n+1, \text{ and} \\
        \omega_{\{0,3\}}(v_n)&= n + 3.\end{align*}


    
    \textbf{Case 2: $n \geq 4$ is even.}  
    In $\overrightarrow{C_6}$, the pair $v_1$ and $v_4$ has the same $\{0,3\}$-neighborhood. Consequently, their $\{0,3\}$-weights are equal for any bijection, implying that $\overrightarrow{C_6}$ is not $\{0,3\}$-antimagic. 
    
    For $n \neq 6$, we will construct a $\{0,3\}$-antimagic labeling of $\overrightarrow{C_n}$. For $n=4$, label $v_1,v_2,v_3,$ and $v_4$ with $1,3,4,$ and $2$, respectively. In this case, $\omega_{\{0,3\}}(v_1)=3,\omega_{\{0,3\}}(v_2)=4,\omega_{\{0,3\}}(v_3)=7,$ and $\omega_{\{0,3\}}(v_4)=6$.
    For $n=8,10,12$, see the labelings in Figure \ref{fig:0,3 n=8,10,12}. 

    For even $n\ge 14$, let $f_1$ be a bijection define as follows:
   
    \begin{itemize}
    \item For $1\le i\le n-8$, $f_1(v_i)=i$, with $\omega_{\{0,3\}}(v_i)=2i+3$,
    \item $f_1(v_{n-7})=n-7,\quad \text{ with }\omega_{\{0,3\}}(v_{n-7})=2n-7,$
    \item $f_1(v_{n-6})=n-6,\quad \text{ with }\omega_{\{0,3\}}(v_{n-6})=2n-10,$
    \item $f_1(v_{n-5})=n-5,\quad \text{ with }\omega_{\{0,3\}}(v_{n-5})=2n-8,$
    \item $f_1(v_{n-4})=n,\textcolor{white}{.. .}\quad\quad \text{ with }\omega_{\{0,3\}}(v_{n-4})=2n-2,$
    \item $f_1(v_{n-3})=n-4,\quad \text{ with }\omega_{\{0,3\}}(v_{n-3})=2n-5,$
    \item $f_1(v_{n-2})=n-3,\quad \text{ with }\omega_{\{0,3\}}(v_{n-2})=n-2,$
    \item $f_1(v_{n-1})=n-2,\quad \text{ with }\omega_{\{0,3\}}(v_{n-1})=n$, and
    \item $f_1(v_n)=n-1,\quad \quad \text{ with }\omega_{\{0,3\}}(v_n)=n+2.$
    \end{itemize}
    By analyzing all cases, we conclude that the unidirectional $\overrightarrow{C_n}$, $n \geq 4$ and $n \neq 6$, is $\{0,3\}$-antimagic.
    \end{proof}
    
    \begin{figure}[ht]
        \centering        \includegraphics[width=1\linewidth]{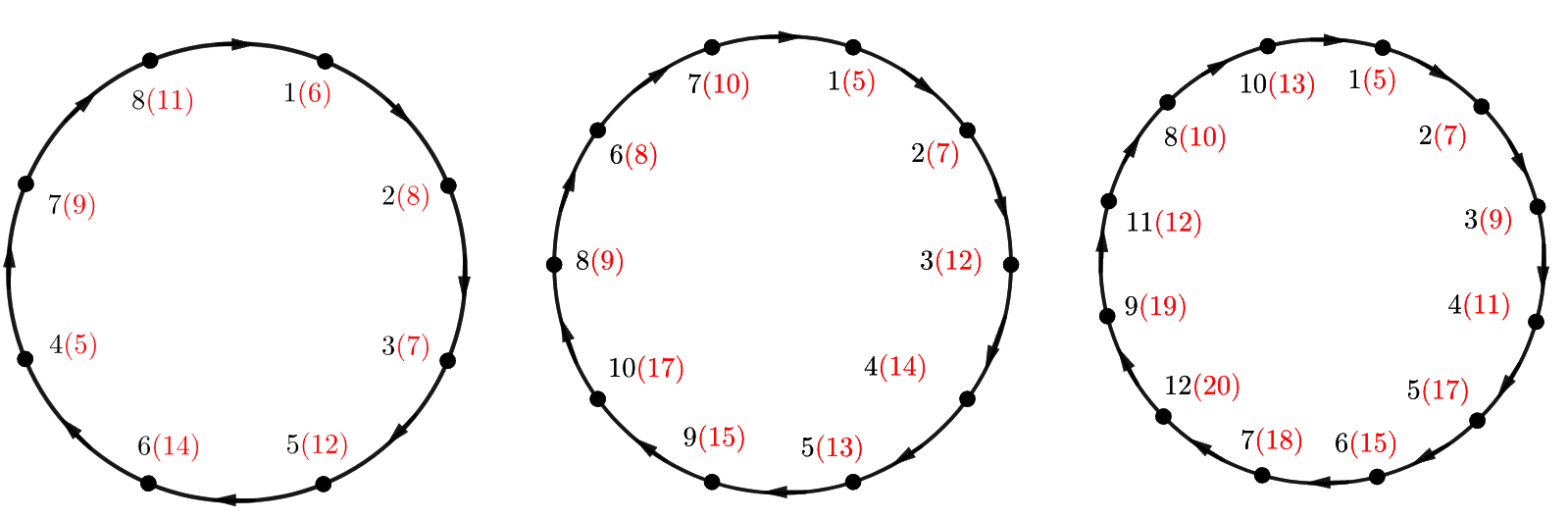}
        \caption{$\{0,3\}$-antimagic labelings of $\overrightarrow{C_8}, \overrightarrow{C_{10}}$, and $\overrightarrow{C_{12}}$.}
        \label{fig:0,3 n=8,10,12}
    \end{figure}

Applying Theorems \ref{th:D iff D+k} and \ref{thm:D* iff D} to Lemmas \ref{lem:{0,1}cycle}, \ref{lem:{0,2}cycle}, and \ref{lem:{0,3}cycle}, we obtain the following:  

\begin{corollary}\label{cor:all results uniC}
Let $a$ be a positive integer, where $0 \leq a \leq n-1$. For the following distance sets \(D\), the unidirectional \(\overrightarrow{C_n}\) is \(D\)-antimagic.
\begin{enumerate}
\item \(D = \{a, (a+1) \mod{n}\}\),
\item \(D = \{a, (a+2) \mod{n}\}\), for \(n \geq 3, n \neq 4\),
\item \(D = \{a, (a+3) \mod{n}\}\), for \(n \geq 4, n \neq 6\).
\item \(D = \{0, 1, \dots, n-1\} \setminus \{a, (a+1) \mod{n}\}\), 
\item \(D = \{0, 1, \dots, n-1\} \setminus \{a, (a+2) \mod{n}\},\text{ for } n \geq 3, n \neq 4.\)
\item \(D = \{0, 1, \dots, n-1\} \setminus \{a, (a+3) \mod{n}\},\text{ for } n \geq 4, n \neq 6.\)
\end{enumerate}
\end{corollary}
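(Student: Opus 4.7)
The plan is to derive the corollary as a direct consequence of the two transformation theorems available in the paper. Lemmas \ref{lem:{0,1}cycle}, \ref{lem:{0,2}cycle}, and \ref{lem:{0,3}cycle} establish that the unidirectional $\overrightarrow{C_n}$ is $\{0,1\}$-antimagic for $n\geq 3$, $\{0,2\}$-antimagic for $n\geq 3, n\neq 4$, and $\{0,3\}$-antimagic for $n\geq 4, n\neq 6$. These three facts are the only construction work required; the corollary merely repackages them.

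For items (1)-(3), I would invoke Theorem \ref{th:D iff D+k} with shift $k=a$. Since $\{0,1\}+a=\{a,(a+1)\bmod n\}$, $\{0,2\}+a=\{a,(a+2)\bmod n\}$, and $\{0,3\}+a=\{a,(a+3)\bmod n\}$, the $D$-antimagic property is preserved under the shift, yielding items (1), (2), (3) with precisely the stated exceptional values of $n$. Here one should note that the unidirectional $\overrightarrow{C_n}$ is strongly connected with diameter $n-1$, so every distance in $\{a,(a+j)\bmod n\}$ for $j\in\{1,2,3\}$ lies within the admissible range $\{0,1,\ldots,n-1\}$, which is all that is needed to apply Theorem \ref{th:D iff D+k}.

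For items (4)-(6), I would apply Theorem \ref{thm:D* iff D} to each of the distance sets in (1)-(3). Since the unidirectional $\overrightarrow{C_n}$ is strongly connected with $\mathrm{diam}(\overrightarrow{C_n})=n-1$, the complementary set is exactly $D^{*}=\{0,1,\ldots,n-1\}\setminus D$. Applying this to $D=\{a,(a+j)\bmod n\}$ for $j=1,2,3$ gives items (4), (5), (6), with the same exceptional values of $n$ inherited from the source lemmas.

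The proof therefore requires no additional construction or case analysis, and there is no genuine obstacle to overcome: the only care point is the bookkeeping to confirm that the shift and complement operations preserve the cardinality restrictions on $n$, and that the two transformation theorems apply under the stated hypotheses (strong connectivity and the shift identity in $\mathbb{Z}_n$). Consequently I would present the proof as a short two-step deduction rather than a multi-case argument.
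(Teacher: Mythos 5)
Your proposal is correct and follows exactly the route the paper intends: the paper derives this corollary by "applying Theorems \ref{th:D iff D+k} and \ref{thm:D* iff D} to Lemmas \ref{lem:{0,1}cycle}, \ref{lem:{0,2}cycle}, and \ref{lem:{0,3}cycle}," which is precisely your two-step shift-then-complement deduction. No discrepancy to report.
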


Due to Theorem
\ref{th:D iff D+k}, all distance sets $D$ of cardinality 2 of a $D$-antimagic unidirectional cycle can be represented by distance sets $\{0,k\}$, $1\le k\le \lfloor \frac{n}{2}\rfloor$. Thus, in the next theorem, we construct such labelings, however only for odd unidirectional cycles.

\begin{theorem}
For odd $n\ge 3$ and $|D|=2$, the unidirectional $\overrightarrow{C_n}$ is $D$-antimagic.  
\end{theorem}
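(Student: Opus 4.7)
The plan is straightforward given the reduction already noted just before the theorem: by Theorem \ref{th:D iff D+k}, every two-element distance set is equivalent, via a cyclic shift, to one of the form $D = \{0, k\}$ with $1 \le k \le \lfloor n/2 \rfloor = (n-1)/2$ (the last equality using that $n$ is odd). It therefore suffices to exhibit, for each such $k$, a single $\{0,k\}$-antimagic labeling of the unidirectional $\overrightarrow{C_n}$.

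I would use the identity labeling $f(v_i) = i$. In the unidirectional $\overrightarrow{C_n}$ the $\{0,k\}$-neighborhood of $v_i$ is $\{v_i, v_{i+k}\}$ with indices read modulo $n$, so the weights split into two arithmetic progressions of common difference $2$: for $1 \le i \le n-k$ one gets $\omega_{\{0,k\}}(v_i) = 2i + k$, while for $n-k+1 \le i \le n$ the wrap-around contributes an extra $-n$ and gives $\omega_{\{0,k\}}(v_i) = 2i + k - n$. Within each of the two ranges the weights are manifestly distinct since the common difference is $2$ and the ranges have length at most $n-1$.

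The key step, and the only place where oddness of $n$ enters, is to verify that the two progressions do not overlap. All weights in the first progression satisfy $\omega \equiv k \pmod 2$, while all weights in the second satisfy $\omega \equiv k - n \equiv k + 1 \pmod 2$ because $n$ is odd. Hence the two sets of weights lie in different parity classes and are therefore disjoint, so all $n$ weights under $f$ are distinct and $f$ is $D$-antimagic.

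I do not anticipate a substantive obstacle; the argument is essentially one parity check. It is worth noting, though, that the same construction collapses for even $n$: there the two parity classes coincide and collisions can occur. This is consistent with the case-by-case treatment and the small exceptions (e.g.\ $\overrightarrow{C_4}$ and $\overrightarrow{C_6}$) encountered in Lemmas \ref{lem:{0,2}cycle} and \ref{lem:{0,3}cycle}, and it explains why the present theorem restricts to odd $n$.
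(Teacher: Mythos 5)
Your proposal is correct and follows essentially the same route as the paper: reduce to $D=\{0,k\}$ via Theorem \ref{th:D iff D+k}, use the identity labeling, and separate the two arithmetic progressions of weights by parity (the paper merely splits the parity check into the two cases $k$ odd and $k$ even, which you handle uniformly).
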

\begin{proof} Let $D=\{0,k\}, 1\le k\le \lfloor \frac{n}{2}\rfloor$. Define a bijection $g(v_i)=i$, $i=1,2,\dots,n$. We consider two cases: 

\textbf{Case 1: $k$ is odd.} 
For $1\le i\le n-k$, $N_D(v_i)=\{v_i,v_{i+k}\}$ and $\omega_D(v_i)=2i+k$, where each $D$-weight is odd and distinct. For $n-k+1\le i\le n$, $N_D(v_i)=\{v_i,v_{i-n+k}\}$ and $\omega_D(v_i)=2i-n+k$, where each $D$-weight is even and distinct.

\textbf{Case 2: $k$ is even.} 
For $1\le i\le n-k$, $N_D(v_i)=\{v_i,v_{i+k}\}$ and $\omega_D(v_i)=2i+k$, where each $D$-weight is even and distinct. For $n-k+1\le i\le n$, $N_D(v_i)=\{v_i,v_{i-n+k}\}$ and $\omega_D(v_i)=2i-n+k$, where each $D$-weight is odd and distinct. 
    
Utilizing Theorem
\ref{th:D iff D+k}, the distance sets $D=\{0,k\}$ can be extended to all distance sets of cardinality 2. 

See Figure \ref{fig:Cn ganjil} for examples of the labelings.
\end{proof}

\begin{figure}[ht]
    \centering    \includegraphics[width=0.65\linewidth]{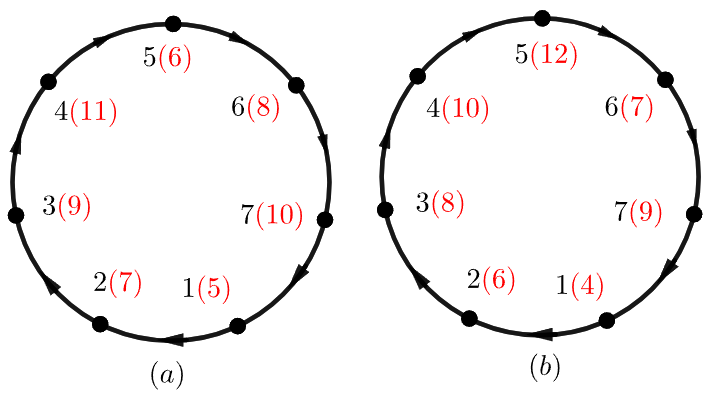}
    \caption{$\{0,k\}$-antimagic labelings of unidirectional $\overrightarrow{C_7}$ where (a) $k=4$ and (b) $k=3$.}
    \label{fig:Cn ganjil}
\end{figure}

Applying Theorem \ref{thm:D* iff D} to the previous theorem, we obtain the following:

\begin{corollary}
For odd $n\ge 3$ and $|D|=2$, the unidirectional $\overrightarrow{C_n}$ is $\{0, 1, \dots, n-1\} \setminus D$-antimagic.  
\end{corollary}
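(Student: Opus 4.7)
The statement is the complementary-distance counterpart of the theorem immediately preceding it, and I would prove it by direct appeal to Theorem \ref{thm:D* iff D}. The plan is first to observe that the unidirectional oriented cycle $\overrightarrow{C_n}$ is strongly connected, since every vertex $v_i$ reaches every other vertex along the single consistent orientation of the arcs $(v_i,v_{i+1})$ (indices mod $n$). Next, I would record that the diameter of the unidirectional $\overrightarrow{C_n}$ is $n-1$, a fact already noted in the paper just after the definition of the unidirectional orientation.

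With these two facts in place, the hypothesis of Theorem \ref{thm:D* iff D} is satisfied, and the complementary distance set is precisely $D^* = \{0,1,\dots,n-1\}\setminus D$. The preceding theorem guarantees that, for odd $n\ge 3$ and every $D$ with $|D|=2$, the unidirectional $\overrightarrow{C_n}$ is $D$-antimagic. Theorem \ref{thm:D* iff D} then yields $D^*$-antimagicness without further work, which is exactly the claim.

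There is no real obstacle: the argument is a one-line invocation of the $D \leftrightarrow D^*$ duality, since the two structural hypotheses (strong connectedness and $\mathrm{diam}=n-1$) are immediate for the unidirectional orientation. The only care needed is to make sure that the $D^*$ used in Theorem \ref{thm:D* iff D} is written with respect to the correct diameter, namely $n-1$, so that $\{0,1,\dots,\mathrm{diam}(\overrightarrow{C_n})\}\setminus D = \{0,1,\dots,n-1\}\setminus D$, matching the statement of the corollary verbatim.
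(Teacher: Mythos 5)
Your proposal is correct and matches the paper exactly: the paper derives this corollary by applying Theorem \ref{thm:D* iff D} to the preceding theorem, using that the unidirectional $\overrightarrow{C_n}$ is strongly connected with diameter $n-1$, so that $D^*=\{0,1,\dots,n-1\}\setminus D$. Your added care about verifying the two structural hypotheses is a sensible (if brief) elaboration of the same one-line argument.
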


So far, we only succeeded in finding labelings for unidirectional odd cycles, and thus we pose the following question.

\begin{problem}
For $|D|=2$, determine whether unidirectional even cycles are $D$-antimagic.    
\end{problem}

\begin{figure}[ht]
    \centering    \includegraphics[width=0.9\linewidth]{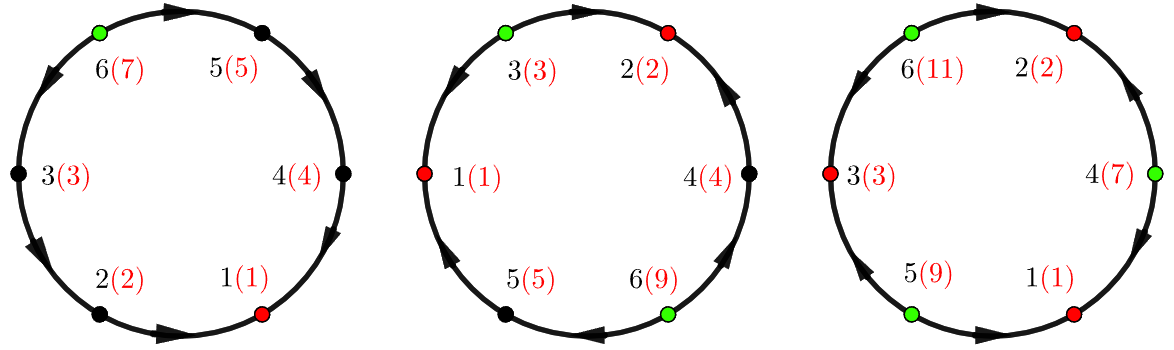}
    \caption{Oriented $\overrightarrow{C_6}$s that are $\{0,3\}$-, $\{0,2\}$-, and $\{0,1\}$-antimagic, respectively.}
    \label{fig:C6 D=2}
\end{figure}

Although for distance sets of cardinality 2, we only consider unidirectional cycles, we have examples in Figure \ref{fig:C6 D=2} where we provide non-unidirectional oriented $\overrightarrow{C_6}$s that are $\{0,3\}$-, $\{0,2\}$-, and $\{0,1\}$-antimagic. 
This leads to the following open problem.
\begin{problem}
For $|D|=2$, list all orientations of a cycle to be $D$-antimagic.     
\end{problem}

\begin{figure}[ht]
    \centering    \includegraphics[width=0.65\linewidth]{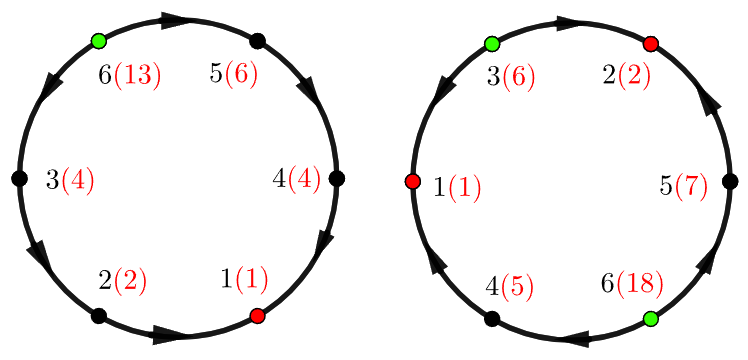}
    \caption{Oriented $C_6$s that are $\{0,2,3\}$-antimagic and $\{0,1,2\}$-antimagic, respectively.}
    \label{fig:C6 D>2}
\end{figure}

Figure \ref{fig:C6 D>2} provides examples of $D$-antimagic oriented cycles, where $|D| \geq 2$. Therefore, we conclude this section by posing the following general problem:     
\begin{problem}
For $n\geq 3$ and $|D| \ge 3$, characterize $D$-antimagic oriented cycles.\end{problem}


\section{$D$-antimagic $2$-regular graphs} \label{sec:2reg}

In this section, we investigate $D$-antimagic labelings of oriented $2$-regular graphs containing at least two cycle components. Similar to oriented cycles in Section \ref{sec:cycle}, we only consider two orientations: unidirectional and $\Theta$.

For 
$1 \leq j \leq t$, $m_j \geq 1$, $n_j \geq 3$, and $\sum_{j=1}^tm_j\ge2$, we use the following notation for an oriented 2-regular graph where the cycle components are arranged nondecreasingly: $$\overrightarrow{C} = \bigcup_{j=1}^t m_j \overrightarrow{C_{n_j}}, \text{ where } n_{j_1} < n_{j_2} \text{ if } j_1 < j_2.$$ 

An oriented 2-regular graph is considered to be \textit{unidirectional} if each cycle component is unidirectional. Similarly, \(\overrightarrow{C}\) is \textit{$\Theta$-oriented} if each cycle component is $\Theta$-oriented. 
We start by characterizing $\{1\}$-antimagic oriented 2-regular graphs.

\begin{theorem}
An oriented 2-regular graph is $\{1\}$-antimagic if and only if at most one of its cycle components is $\Theta$-oriented, while the rest are unidirectional.   
\end{theorem}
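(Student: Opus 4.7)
The plan is to mirror and extend the single-component argument of Theorem \ref{lem:1-A iff Theta1,2}.

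For necessity, I would argue contrapositively. If some cycle component $C^{(j)}$ of $\overrightarrow{C}$ is neither unidirectional nor $\Theta$-oriented, then the case analysis used in the proof of Theorem \ref{lem:1-A iff Theta1,2} produces two vertices inside $C^{(j)}$ that share their $\{1\}$-neighborhood: either two sinks with empty out-neighborhoods, or two in-neighbors of a common sink whose only out-neighbor is that sink. Those two vertices carry equal $\{1\}$-weights under every bijection. If instead two or more components are $\Theta$-oriented, then each contributes a sink with empty out-neighborhood, forcing at least two vertices of $\{1\}$-weight $0$.

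For sufficiency, I would construct an explicit labeling. List the unidirectional components as $C^{(1)}, \ldots, C^{(s)}$ and, when it exists, take $C^{(s+1)}$ to be the unique $\Theta$-oriented component, of order $m$. If $C^{(s+1)}$ is present, label it using the reversed bijection $f(v_i) = m-i+1$ from the proof of Theorem \ref{lem:1-A iff Theta1,2}, consuming the labels $\{1, 2, \ldots, m\}$; a direct computation yields the $\{1\}$-weights $m, m-2, m-3, \ldots, 1, 0$ for $v_1, v_2, \ldots, v_m$, all distinct and all contained in $\{0,1,\ldots,m-2\} \cup \{m\}$. Then distribute the remaining labels $\{m+1, \ldots, n\}$ across the unidirectional components in disjoint contiguous blocks, choosing any bijection inside each block. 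When no $\Theta$ component is present, simply partition $\{1, \ldots, n\}$ into disjoint contiguous blocks for the unidirectional components.

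Distinctness of $\{1\}$-weights across the whole graph then follows from two observations. First, inside each unidirectional $C^{(j)}$, every vertex's weight equals the label of its unique out-neighbor, so the weights on $C^{(j)}$ coincide with its label block and are automatically distinct from one another and from the weights of every other unidirectional component. Second, the $\Theta$ weights lie in $\{0,1,\ldots,m\}$ while all unidirectional weights lie in $\{m+1, \ldots, n\}$, so the two groups cannot collide. The step I expect to require the most care is the treatment of the source vertex $v_1$ of the $\Theta$ component, whose weight is the sum $f(v_2)+f(v_m)$ rather than a single label and could a priori coincide with an internal $\Theta$ weight or with a unidirectional weight; under the reversed labeling this sum evaluates to exactly $(m-1)+1=m$, which is strictly smaller than every unidirectional weight and is absent from the remaining internal $\Theta$ weights $\{0, 1, \ldots, m-2\}$, so the construction closes cleanly.
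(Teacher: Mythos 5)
Your proposal is correct and takes essentially the same approach as the paper: an explicit labeling that places the $\Theta$-oriented component's labels in a contiguous block at one end of $\{1,\dots,n\}$ so that its weights (computed via the reversed bijection of Theorem \ref{lem:1-A iff Theta1,2}) cannot collide with the unidirectional weights, each of which is just the label of a unique out-neighbor. The only differences are cosmetic: you assign the $\Theta$ component the smallest labels where the paper assigns it the largest, and your necessity argument explicitly treats a component that is neither unidirectional nor $\Theta$-oriented (two sinks, or two in-neighbors sharing a sink as sole out-neighbor), a case the paper leaves implicit from its earlier cycle characterization.
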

\begin{proof}
If each cycle of $\overrightarrow{C}$ is unidirectional, then each vertex has a unique and distinct out-neighbor, resulting in distinct $\{1\}$-weights. 

Now, consider the case where there is a $\Theta$-oriented cycle component of order $\nu$. Denote its vertices by $u_l$, $1\le l\le \nu$, where $u_1$ and $u_{\nu}$ are the source and the sink, respectively. Let $f_0$ be a vertex labeling of all the unidirectional cycle components. Define a bijection $f_*$ where $f_*(u_l)=\sum_{j=1}^t m_jn_j-l+1$ and $f_*(v_{i,s}^j)=f_0(v_{i,s}^j)$.  Then, each vertex of $\overrightarrow{C}$, other than $u_{1}$, has a unique out-neighbor. This leads to a distinct $\{1\}$-weight for each vertex, which is less than $\omega_{\{1\}}(u_{1})=2\sum_{j=1}^t m_jn_j-\nu$.

Conversely, recall that each $\Theta$-oriented cycle has exactly one sink. If the oriented 2-regular graph has two or more $\Theta$-oriented cycle components, there are at least two vertices with zero $\{1\}$-weights. Hence, the graph is not $\{1\}$-antimagic.
\end{proof}

The next theorem completes the characterization of $D$-antimagic oriented 2-regular graphs, where $D$ consists of a singleton. 

\begin{theorem}
For \(2 \leq k \leq n_1-1\), \(\bigcup_{j=1}^t m_j \overrightarrow{C_{n_j}}\) is \(\{k\}\)-antimagic if and only if it is unidirectional.
\end{theorem}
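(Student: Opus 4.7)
My plan for the forward direction is to argue by contradiction using the structural reasoning behind Lemma~\ref{lem:min2}. Suppose the graph is $\{k\}$-antimagic with $k\ge 2$ but some cycle component $\overrightarrow{C_{n_j}}$ is not unidirectional. By Theorem~\ref{sink=source} this component contains at least one sink $s$, and clearly $N_{\{k\}}(s)=\emptyset$. I intend to locate a second vertex $u$ in the same component with $N_{\{k\}}(u)=\emptyset$ by splitting into cases: if the component has two or more sinks, take $u$ to be another sink; if it has exactly one sink and one source that are adjacent, take $u$ to be the non-source in-neighbor of $s$, which then has out-degree $1$ and reaches only $s$ at distance~$1$; and if the unique sink-source pair is non-adjacent, both in-neighbors of $s$ have out-degree $1$, so either may serve as $u$. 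In every case, both $s$ and $u$ receive weight $0$ under any labeling, contradicting the $\{k\}$-antimagic hypothesis.

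For the converse, I would construct an explicit labeling component by component. Fix any ordering of the $\sum_{j=1}^t m_j$ cycle components and, for the $s$-th copy of $\overrightarrow{C_{n_j}}$, let $a_{j,s}$ denote the total number of vertices lying in the components preceding it. Label the $i$-th vertex of that copy by $a_{j,s}+i$, so that altogether we obtain a bijection onto $\{1,2,\dots,\sum_j m_jn_j\}$. Because the copy is unidirectional, each of its vertices $v_i$ has the unique $\{k\}$-out-neighbor $v_{((i+k-1)\bmod n_j)+1}$, so the $\{k\}$-weights of the vertices in this copy form a cyclic permutation of the label block $\{a_{j,s}+1,\dots,a_{j,s}+n_j\}$. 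In particular the $n_j$ weights inside one copy are pairwise distinct, and since the blocks assigned to different copies are disjoint, so are the corresponding weight blocks, which gives pairwise distinct weights across the whole graph.

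The only real obstacle is the case analysis in the forward direction: one has to rule out the possibility that \emph{only} the sink in a non-unidirectional component has empty $\{k\}$-neighborhood, and this is precisely where the three sub-cases above are required. Once a second zero-weight vertex is exhibited in the offending component, the 2-regular structure plays no further role and the contradiction is immediate.
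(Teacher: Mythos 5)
Your proposal is correct and follows essentially the same route as the paper: the forward direction exhibits a second zero-weight vertex alongside a sink (the paper states tersely that a sink and one of its in-neighbors both get weight $0$; your case analysis makes precise why such an in-neighbor with empty $\{k\}$-neighborhood must exist even when one in-neighbor is a source), and the converse rests on each vertex having a unique distance-$k$ out-neighbor. The only difference is that the paper observes that \emph{any} bijection works for the unidirectional case, so your explicit block labeling, while valid, is more than is needed.
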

\begin{proof}
If $\overrightarrow{C}=\bigcup_{j=1}^t m_j \overrightarrow{C_{n_j}}$ is not unidirectional, then it has at least one sink with at least one in-neighbor. For any bijection, the $\{k\}$-weights of those two vertices are zero.  

If $\overrightarrow{C}$ is unidirectional then each vertex has a unique out-neighbor, and so any bijection is a $\{k\}$-antimagic labeling of $\overrightarrow{C}$. 
\end{proof}

The next step is to characterize $D$-antimagic oriented 2-regular graphs, where the cardinality of $D$ is at least 2, as proposed in the following:
\begin{problem}
For $|D|\ge 2$, characterize $D$-antimagic oriented 2-regular graphs.
\end{problem}

Our last result is a construction of a $D$-antimagic labeling for a $\Theta$-oriented 2-regular graph. 

\begin{theorem}
A $\Theta$-oriented 2-regular graph is \(D\)-antimagic if and only if $\min(D)=0$.
\end{theorem}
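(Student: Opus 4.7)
The plan is to prove the two directions separately. For necessity, suppose $\min(D)\geq 1$, so $0\notin D$. Each $\Theta$-oriented cycle component of $\overrightarrow{C}$ contains exactly one sink, a vertex of out-degree zero from which no other vertex is reachable by a directed path. Thus every sink $u$ satisfies $N_D(u)=\emptyset$ and $\omega_D(u)=0$; since $\overrightarrow{C}$ has at least two components, at least two distinct vertices share the $D$-weight $0$, precluding $D$-antimagicness.

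For sufficiency, assume $\min(D)=0$, and I will build a $D$-antimagic labeling component by component, reusing the single-cycle scheme from the proof of Theorem \ref{the:all D for theta} (Case 1, $d_0=0$). Order the components $C^1,\dots,C^s$ with $s=\sum_j m_j\geq 2$, let $M^l=\sum_{k<l} n_k$, and assign to $C^l$ the contiguous label block $\{M^l+1,\dots,M^l+n_l\}$ via $g(v_i^l)=M^l+n_l-i+1$. The proof of Theorem \ref{the:all D for theta} shows that within each $C^l$ the $D$-weights are strictly decreasing in $i$, so they are distinct locally; in particular the sinks obtain the pairwise distinct weights $M^l+1$.

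The main obstacle is ruling out collisions between vertices in different cycle components. A non-sink $v_i^l$ has weight of the form $|N_D(v_i^l)|\cdot M^l+\tilde\omega_D(v_i^l)$, where $\tilde\omega_D$ is the base weight produced by the unshifted single-cycle labeling, and these linear expressions in $M^l$ can coincide across components: for instance, three $\Theta$-oriented triangles with $D=\{0,1\}$ under the consecutive-block scheme yield two vertices of weight $15$. I plan to resolve this by induction on $s$, showing that after fixing a labeling of $C^1\cup\dots\cup C^{s-1}$ one may choose for $C^s$ any of the several valid $D$-antimagic labelings guaranteed by Theorem \ref{the:all D for theta} and, if necessary, perform a local swap within $C^s$ to dodge the finite set of weights already in use. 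Quantifying this avoidance — that the single-cycle $\Theta$-oriented construction admits enough flexibility to miss the $N-n_s$ forbidden values — is the technical step I expect to require the most care.
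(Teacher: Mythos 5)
Your necessity argument is correct and matches the paper's: each $\Theta$-oriented component has exactly one sink, and if $0\notin D$ all of these sinks have empty $D$-neighborhood and weight $0$, so with at least two components the labeling cannot be antimagic.

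The sufficiency direction, however, has a genuine gap that you yourself flag but do not close. You correctly observe that the contiguous-block labeling (shifting the single-cycle scheme of Theorem~\ref{the:all D for theta} by $M^l$ on component $C^l$) produces cross-component collisions --- your three-triangle example with $D=\{0,1\}$ and two vertices of weight $15$ is a valid counterexample to that naive approach --- but your proposed repair, an induction in which one performs ``a local swap within $C^s$ to dodge the finite set of weights already in use,'' is exactly the step that needs proof and is missing. It is not clear that the single-cycle construction has enough flexibility: the number of forbidden values can be as large as $N-n_s$, which may vastly exceed the number of distinct $D$-antimagic labelings of a small component $C^s$ reachable by local swaps, so the avoidance claim cannot be waved through by a counting remark. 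The paper avoids this difficulty entirely by abandoning contiguous blocks: its bijection $h_*$ interleaves the labels so that vertices are ordered primarily by their position $i$ within their cycle (distance from the source end), then by component index $j$, then by copy index $s$. With that labeling the $D$-weight becomes monotone in $i$ across the whole graph, and ties at equal $i$ are broken by $j$ and then $s$, so no cross-component collision argument is needed. If you want to complete your proof along your own lines, you must either prove the avoidance lemma you postulate or switch to a globally coordinated labeling of this interleaved type.
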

\begin{proof} Since each cycle component has one sink, if $\min(D)>0$, those sinks will share zero $D$-weight.

For $t\geq 2$, $1\le j\le t, 1\le s\le m_j,$ and $1\le i\le n_j$, define a bijection $h_*:V\left(\overrightarrow{C}\right) \rightarrow \{1, 2, \dots,\sum_{p=1}^t m_pn_p\}$ by 
\begin{align*}
    h_*\left(v_i^{j,s}\right)=&\sum_{q=1}^{j_i-1}\sum_{p=q}^t m_p\left(n_q-n_{q-1}\right)+\left(i-n_{j_i-1}-1\right)\sum_{q=j_i}^t m_q\\&+\sum_{q=j_i}^{j-1} m_q +s,
\end{align*}
where $n_0=0$ and $j_i$ is a natural number such that $n_{j_i-1}+1\le i\le n_{j_i}$.

Let $D=\{d_0,d_1,\dots,d_p\}$, where $0=d_0<d_1<\dots<d_p\le n-2$ and $p_{n_j}=\max\{r\in [1,p]\mid d_r\le n_j\}$. We consider the following two cases:

\noindent \textbf{Case 1: $d_1=1$.} The $D$-neighborhood and $D$-weight of each vertex are:

\noindent For $d_{r}+1\le i\le d_{r+1},0\le r\le p_{n_j}-1$: $N_D(v_{i,s}^j)=\{v_{i-d_l,s}^j|0 \le l \le r\}$ and 
$\omega_D(v_{i,s}^j)=\sum_{l=0}^r\Biggl(\sum_{q=1}^{j_{(i-d_l)}-1}\sum_{p=q}^t m_p(n_q-n_{q-1})+\sum_{q=j_{(i-d_l)}}^{j-1} m_q +s+\left((i-d_l)-n_{j_{(i-d_l)}-1}-1\right)\sum_{q=j_{(i-d_l)}}^t m_q  \Biggr)$.
 
\noindent For $d_{p_{n_j}}+1\le i\le n_j-1$: $N_D(v_{i,s}^j)= \{v_{i-d_l,s}^j|0 \le l \le p_{n_j}\}$ and $\omega_D(v_{i,s}^j)=\sum_{l=0}^{p_{n_j}}\Biggl(\sum_{q=1}^{j_{(i-d_l)}-1}\sum_{p=q}^t m_p(n_q-n_{q-1})+\sum_{q=j_{(i-d_l)}}^{j-1} m_q +s+\left((i-d_l)-n_{j_{(i-d_l)}-1}-1\right)\sum_{q=j_{(i-d_l)}}^t m_q  \Biggr)$.
    
\noindent For $i=n_j$: $N_D(v_{n_j,s}^j)= \{v_{n-d_l,s}^j|0 \le l \le p_{n_j}\}\cup\{v_{1,s}^j\}$ and $\omega_D(v_{i,s}^j)=\sum_{l=0}^{p_{n_j}}\Biggl(\sum_{q=1}^{j_{(i-d_l)}-1}\sum_{p=q}^t m_p(n_q-n_{q-1})  +\sum_{q=j_{(i-d_l)}}^{j-1} m_q +s+\left((i-d_l)-n_{j_{(i-d_l)}-1}-1\right)\sum_{q=j_{(i-d_l)}}^t m_q\Biggr)+\sum_{l=1}^{j-1}n_j+s$.

\noindent \textbf{Case 2: $d_1\ne 1$.} The $D$-neighborhoods and $D$-weights for all vertices are:
    
\noindent For $d_{r}+1\le i\le d_{r+1},0\le r\le p_{n_j}-1$: $N_D(v_{i,s}^j)=\{v_{i-d_l,s}^j|0 \le l \le r\}$ and
$\omega_D(v_{i,s}^j)=\sum_{l=0}^r\Biggl(\sum_{q=1}^{j_{(i-d_l)}-1}\sum_{p=q}^t m_p(n_q-n_{q-1})+\sum_{q=j_{(i-d_l)}}^{j-1} m_q +s+\left((i-d_l)-n_{j_{(i-d_l)}-1}-1\right)\sum_{q=j_{(i-d_l)}}^t m_q  \Biggr)$.

\noindent For $d_{p_{n_j}}+1\le i\le n_j$: $N_D(v_{i,s}^j)= \{v_{i-d_l,s}^j|0 \le l \le p_{n_j}\}$ and 
$\omega_D(v_{i,s}^j)=\sum_{l=0}^{p_{n_j}}\Biggl(\sum_{q=1}^{j_{(i-d_l)}-1}\sum_{p=q}^t m_p(n_q-n_{q-1})+\sum_{q=j_{(i-d_l)}}^{j-1} m_q +s+\left((i-d_l)-n_{j_{(i-d_l)}-1}-1\right)\sum_{q=j_{(i-d_l)}}^t m_q  \Biggr)$.

Now we shall show that the $D$-weights are distinct in both cases by considering the following:
\begin{description}
\item[Case A: $i\ne i'$.] Without loss of generality, if $i>i'$ then $\omega_D(v_{i,s}^{j})>\omega_D(v_{i',s'}^{j'})$.
\item[Case B: $i=i'$.] Consider the following two subcases based on $j$:
\begin{description}
\item[Subcase B.1: $j\ne j'$.] Without loss of generality, if $j>j'$ then $\omega_D(v_{i,s}^{j})>\omega_D(v_{i,s'}^{j'})$.
\item[Subcase B.2: $j= j'$.] If $s>s'$ then $\omega_D(v_{i,s}^{j})>\omega_D(v_{i,s'}^{j})$.
\end{description}
\end{description}

Figure \ref{fig:C} provides an example of the labeling.\end{proof}

\begin{figure}[ht]
    \centering
    \includegraphics[width=1
\linewidth]{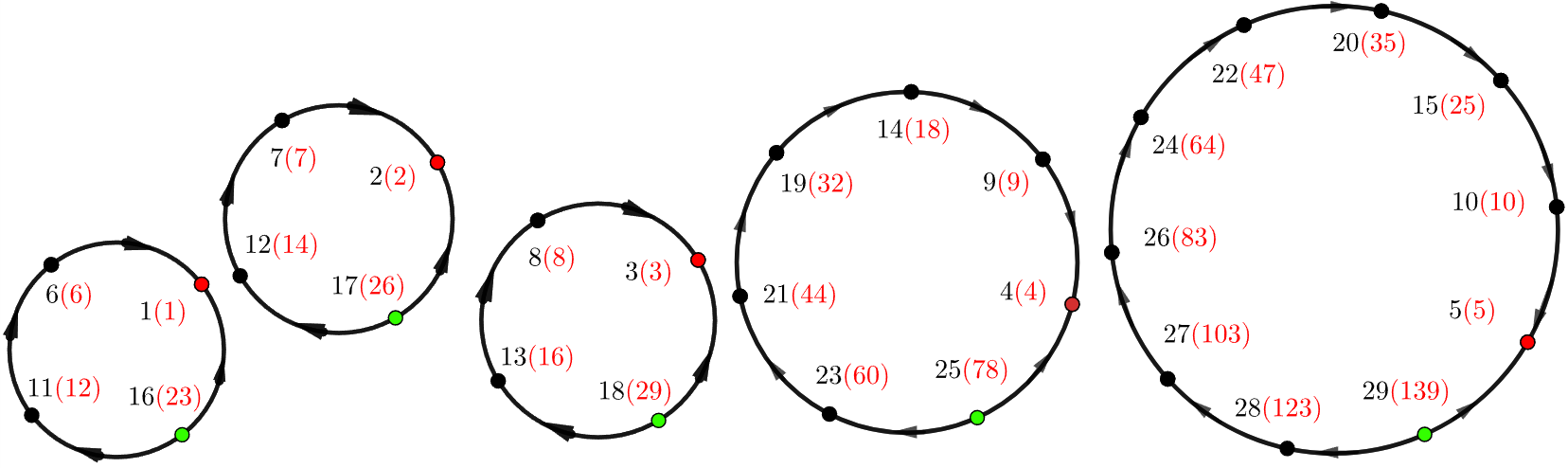}
    \caption{A $\{0,2,5,6,7\}$-antimagic labeling of $\Theta$-oriented $3\overrightarrow{C_4}\cup \overrightarrow{C_7}\cup\overrightarrow{C_{10}}$.}
    \label{fig:C}
\end{figure}

We conclude by proposing the following general open problem.

\begin{problem}
Find all possible orientations of a 2-regular graph admitting a $D$-antimagic labeling.
\end{problem}




\begin{thebibliography}{99}

\bibitem{Abrarlinearforest} A. M. Abrar and R. Simanjuntak, "$D$-antimagic labeling of oriented linear forest", submitted.
\bibitem{Handa} A. K. Handa, A. Godinho, T. Singh, and S. Arumugam, "Distance antimagic labeling of join and corona of two graphs," \textit{AKCE International Journal of Graphs and Combinatorics}, vol. 14, pp. 172--177, 2017.
\bibitem{Kamatchi-64-13} N. Kamatchi and S. Arumugam, "Distance antimagic graphs," \textit{Journal of Combinatorial Mathematics and Combinatorial Computing}, vol. 64, pp. 61--67, 2013.
\bibitem{Kamatchi cube} N. Kamatchi, G. R. Vijayakumar, A. Ramalakshmi, S. Nilavarasi, and S. Arumugam, "Distance antimagic labelings of graphs," \textit{Lecture Notes in Computer Science}, vol. 10398, pp. 113--118, 2017.
\bibitem{AAC} R. Simanjuntak, T. Nadeak, F. Yasin, K. Wijaya, N. Hinding, and K. A. Sugeng, "Another antimagic conjecture", \textit{Symmetry}, vol. 13, no. 12, p. 2071, 2021.
\bibitem{simanjuntakprod} R. Simanjuntak and A. Tritama, "Distance antimagic product graphs", \textit{Symmetry}, vol. 14, no. 8, p. 1411, 2022.
\bibitem{sy2014distance} S. Sy, R. Simanjuntak, T. Nadeak, K. A. Sugeng, and T. Tulus, "Distance antimagic labeling of circulant graphs", \textit{AIMS Mathematics}, vol. 9, pp. 21177--21188, 2024.
\bibitem{DB West} D. B. West, \textit{Introduction to Graph Theory}, 2nd ed. Prentice Hall, 2001, p. 26, ISBN: 9780132278287.
\end{thebibliography}
\end{document}